\definecolor{uuuuuu}{rgb}{0.26666666666666666,0.26666666666666666,0.26666666666666666}
\newtheorem{theorem}{Theorem}[section]
\newtheorem{lemma}[theorem]{Lemma}
\newtheorem{claim}[theorem]{Claim}
\DeclareMathOperator\G{\mathcal{G}}
\DeclareMathOperator\F{\mathcal{F}}
\title{The $m$-bipartite Ramsey number $BR_m(K_{2,2},K_{5,5})$}
\author[1]{Yaser Rowshan$^1$  }
\keywords{Ramsey numbers, Bipartite Ramsey numbers, complete graphs, m-bipartite Ramsey
	number.}
\subjclass[2010]{05D10, 05C55.}
\address{$^1$Department of Mathematics, Institute for Advanced Studies in Basic Sciences (IASBS), Zanjan 66731-45137, Iran}
\email{y.rowshan@iasbs.ac.ir,~~~y.rowshan.math@gmail.com}
\begin{document}
	\maketitle
	
 	\begin{abstract} The bipartite Ramsey number $BR(H_1,H_2,\ldots,H_k)$ is the smallest positive integer $b$, such that each  $k$-decomposition of $E(K_{b,b})$ contains $H_i$ in the $i$-th class for some $i, 1\leq i\leq k$. As another view of bipartite Ramsey numbers, for the given two bipartite graphs $H_1$ and $H_2$ and a positive integer $m$, the $m$-bipartite Ramsey number $BR_m(H_1, H_2)$, is defined as the least integer $n$, such that any subgraph of  $K_{m,n}$ say $H$, results in $H_1\subseteq H$ or $H_2\subseteq \overline{H}$. The size of $BR_m(K_{2,2}, K_{3,3})$, $BR_m(K_{2,2}, K_{4,4})$ for each $m$, and  the size of   $BR_m(K_{3,3}, K_{3,3})$ for some $m$, have been determined in several papers up to now. Also, it is shown that $BR(K_{2,2}, K_{5,5})=17$.  In this article,  we compute the size  of  $BR_m(K_{2,2}, K_{5,5})$ for some $m\geq 2$.
	\end{abstract}
	
	\section{Introduction}
In his $1930$  on formal logic, F. Ramsey proved that if the $t$-combinations of an infinite class $\G$ are colored by $d$ distinct colors, then there exists a subclass $\F\subseteq \G$ so that all of the $t$-combinations of $\F$ have the same color. For $t = 2$, this is equivalent to saying that an infinite complete graph whose edges are colored in $d$ colors contains an infinite monochromatic complete subgraph. For given two graphs $G$ and $H$  the  Ramsey number	$R(G, H)$ is the minimum order of a complete graph such that any $2$-coloring of the edges must result in either a copy of graph $G$ in the first color or a copy of graph $H$ in the second color.  All such Ramsey numbers $R(G, H)$ exist as well. Also, it is shown that  $R(G, H)\leq R(K_m, K_n)$ where $|G|=m$ and $|H|=n$.
 
 Beineke and Schwenk, introduced the bipartite version of Ramsey numbers \cite{beinere1976bipartite}. For given bipartite graphs $H_1, H_2, \ldots H_k$, the bipartite Ramsey number $BR(H_1,H_2,\ldots,H_k)$  is the smallest positive integer $b$, such that each  $k$-decomposition of $E(K_{b,b})$, contains $H_i$ in the $i$-th class for some $i, 1\leq i\leq k$.  One can refer to \cite{goedgebeur2022new, gyarfas1973ramsey, hattingh1998star, raeisi2015star, bucic2019multicolour, bucic20193, lakshmi2020three, kamranian2022star, rowshan2023multicolor}, \cite{wang2021bipartite, hatala2021new, rowshan2022proof, gholami2021bipartite, goddard2000bipartite, rowshan2021size} and their references for further studies.
	 
	 Assume that $H_1$ and $H_2$ are two bipartite graphs. For each $m\geq 1$, the $m$-bipartite Ramsey number $BR_m(H_1, H_2)$, is defined as the least integer $n$, such that  any subgraph of  $K_{m,n}$ say $H$, results in $H_1\subseteq H$ or $H_2\subseteq \overline{H}$. The size of $BR_m(H_1, H_2)$ where $H_1\in \{K_{2,2}, K_{3,3}\}$ and  $H_2\in \{K_{3,3}, K_{4,4}\}$, have been determined in  some previous articles. In particular:
	\begin{theorem}\cite{bi2018another, 2022arXiv220112844R}
	For each  positive integer $m\geq 2 $, we have:
	\[
	BR_m(K_{2,2},K_{3,3})= \left\lbrace
	\begin{array}{ll}	
		\text{does not exist}, & ~~~where~~~m=2,3,\vspace{.2 cm}\\
		15& ~~~where~~~m=4,\vspace{.2 cm}\\
		12& ~~~where~~~m=5,6,\vspace{.2 cm}\\
		9 & ~~~where~~~ m=7,8.\vspace{.2 cm}\\
		
	\end{array}
	\right.
	\]	
\end{theorem}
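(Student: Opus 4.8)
The plan is to handle all seven values of $m$ through one translation. Write the sides of $K_{m,n}$ as $X$ with $|X|=m$ and $Y$ with $|Y|=n$, let $\overline H$ be the bipartite complement of $H$ inside $K_{m,n}$, and for $y\in Y$ put $A_y:=N_H(y)\subseteq X$. Then $H$ is $K_{2,2}$-free exactly when $|A_y\cap A_{y'}|\le 1$ for all $y\neq y'$ and, equivalently, when every pair of vertices of $X$ lies in at most one $A_y$; double-counting those pairs gives the \emph{cherry bound} $\sum_{y\in Y}\binom{|A_y|}{2}\le\binom m2$. On the other side, $\overline H$ contains $K_{3,3}$ exactly when some three columns $y_1,y_2,y_3$ satisfy $|A_{y_1}\cup A_{y_2}\cup A_{y_3}|\le m-3$, the three required common $\overline H$-neighbours being any three vertices of $X$ outside that union; and because the $A_y$ pairwise meet in at most one point, $|A_{y_1}\cup A_{y_2}\cup A_{y_3}|\ge |A_{y_1}|+|A_{y_2}|+|A_{y_3}|-3$. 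So the theorem becomes an extremal question about lists of subsets of an $m$-set, no set of size $\ge2$ repeated: how long can such a list be while no three of its members have union of size $\le m-3$?

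The cases $m=2,3$ are immediate: for every $n$ let $H$ place a single edge, all incident to one fixed vertex of $X$, in each of $n-2$ columns and leave the remaining two columns isolated; then $H$ is $K_{2,2}$-free while $\overline H$ has only two isolated columns and hence no $K_{3,3}$, so $BR_m(K_{2,2},K_{3,3})$ does not exist. For $m\in\{4,5,6,7,8\}$ the lower bounds come from near-optimal lists one column short of $n_0(m)$, where $n_0(4)=15$, $n_0(5)=n_0(6)=12$, $n_0(7)=n_0(8)=9$: for $m=4$ take two copies of each singleton of $X$ together with one copy of each pair (fourteen columns); for $m=5$ all ten pairs of $X$ together with one singleton (eleven columns); for $m=6$ the nine pairs joining the two sides of a balanced bipartition $X=X_1\cup X_2$ together with the two columns $X_1$ and $X_2$ (eleven columns); for $m=7$ the seven lines of the Fano plane on $X$ plus one singleton, and for $m=8$ eight edge-disjoint triangles in $X$ (a maximum triangle packing of $K_8$), eight columns each. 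In every case no two of the sets meet in two points and no three have union of size $\le m-3$, so the corresponding $H$ is $K_{2,2}$-free with $K_{3,3}$-free complement.

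For the upper bounds, fix $m\in\{4,\dots,8\}$, set $n=n_0(m)$, assume $H\subseteq K_{m,n}$ is $K_{2,2}$-free, and suppose toward a contradiction that $\overline H$ has no $K_{3,3}$. Call a column \emph{heavy} if $|A_y|\ge m-2$ --- then its $\overline H$-degree is at most $2$, so it cannot appear in a $K_{3,3}$ of $\overline H$ --- and \emph{light} otherwise. By the cherry bound each heavy column uses at least $\binom{m-2}{2}$ of the budget $\binom m2$, so their number $h$ satisfies $h\le\binom m2/\binom{m-2}{2}$, namely $h\le 6,3,2,2,1$ for $m=4,\dots,8$. Since $\overline H$ has no $K_{3,3}$, for every $(m-3)$-subset $S\subseteq X$ at most two light columns satisfy $A_y\subseteq S$. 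Summing these inequalities over all $S$ --- and, for $m\ge5$, also using them individually against the residual cherry budget, under which size-$\le1$ light columns are free but lie in many sets $S$ while columns of intermediate size trade the two constraints off --- one shows that $h$ plus the number of light columns is at most $n_0(m)-1$, so $n\le n_0(m)-1$, a contradiction.

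The crux is the upper bound for $m\in\{5,6,7,8\}$. When $m=4$ the argument is clean: ``light'' just means $H$-degree $\le1$, at most two light columns satisfy $A_y\subseteq\{x_i\}$ for each of the $m$ vertices $x_i$, so there are at most $2m=8$ light columns, and $h+8\le 14<15$. For larger $m$ a light column may have any size from $0$ to $m-3$, so it may or may not fit into a forbidden triple while still consuming cherry budget, and the pairwise-intersection pattern of the $A_y$'s now genuinely matters; one is forced into a case analysis organised by the multiset of light-column sizes (essentially an enumeration when $n_0(m)=9$), and the delicate part is to make every case close at exactly $n_0(m)-1$ rather than one or two larger. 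The lower-bound lists above are precisely the configurations this analysis must certify as optimal.
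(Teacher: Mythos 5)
This statement is quoted background in the paper (it is cited to \cite{bi2018another, 2022arXiv220112844R}); the paper contains no proof of it, so your attempt can only be judged on its own completeness. Your translation is sound ($K_{2,2}$-freeness $\Leftrightarrow$ pairwise intersections $|A_y\cap A_{y'}|\le 1$, hence the bound $\sum_y\binom{|A_y|}{2}\le\binom m2$; $K_{3,3}\subseteq\overline H$ $\Leftrightarrow$ three columns with union of size at most $m-3$), the nonexistence argument for $m=2,3$ works, and all five lower-bound configurations check out (the Fano-plane and the $8$ edge-disjoint triangles in $K_8$ do exist and have the required properties), as does the complete upper-bound count for $m=4$ ($h\le 6$ heavy columns plus at most $8$ light ones gives $n\le 14$).

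The genuine gap is the upper bound for $m\in\{5,6,7,8\}$ (even after the monotonicity $BR_{m+1}\le BR_m$, you still need $m=5$ and $m=7$), and you do not prove it: you only state the two global constraints (the cherry budget and ``at most two light columns inside any $(m-3)$-set'') and assert that summing them, together with an unspecified trade-off for columns of intermediate size, yields $n\le n_0(m)-1$, while explicitly deferring ``the delicate part'' of a case analysis over the multiset of light-column sizes. That case analysis is the entire content of these upper bounds; the two constraints alone do not obviously force $n\le 11$ for $m=5$ or $n\le 8$ for $m=7$ (for instance, for $m=7$ one must rule out nine columns whose sizes can range over $0,\dots,5$, where size-$2$ and size-$3$ columns consume cherry budget slowly but can still be packed below the union threshold, so the two bounds must be combined carefully rather than added). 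Until those cases are actually closed, the values $12$ and $9$ are not established, so the proof as written is incomplete for $m=5,6,7,8$.
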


	\begin{theorem}\cite{chartrand2021new, bi2019new}
	For each  positive integer $m\geq 2 $, we have:
	\[
	BR_m(K_{3,3},K_{3,3})= \left\lbrace
	\begin{array}{ll}	
		\text{does not exist}, & ~~~where~~m=2,3,4,\vspace{.2 cm}\\
		41 & ~~~where~~~m=5,6,\vspace{.2 cm}\\
		29 & ~~~where~~~ m=7,8.\vspace{.2 cm}\\
		
	\end{array}
	\right.
	\]	
\end{theorem}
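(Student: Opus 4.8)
The plan is to pin down each value by pairing a double-counting upper bound on $n$ with a matching explicit construction for the lower bound, the non-existence cases coming from one trivial family of examples.

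Fix the parts $X,Y$ of $K_{m,n}$ with $|X|=m$, and view a subgraph $H\subseteq K_{m,n}$ as a red/blue colouring with red $=E(H)$; for $y\in Y$ let $r(y)$ be its red-degree. A copy of $K_{3,3}$ in $H$ is precisely a triple $T\in\binom{X}{3}$ together with three vertices of $Y$ each joined to all of $T$ in red, so $H$ is red-$K_{3,3}$-free if and only if every $T\in\binom{X}{3}$ is joined in red to all of at most $2$ vertices of $Y$. Summing this over the $\binom{m}{3}$ triples and interchanging the order of summation gives $\sum_{y\in Y}\binom{r(y)}{3}\le 2\binom{m}{3}$, and the same applied to $\overline{H}$ gives $\sum_{y\in Y}\binom{m-r(y)}{3}\le 2\binom{m}{3}$. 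Adding these, $\sum_{y\in Y}\varphi_m(r(y))\le 4\binom{m}{3}$ with $\varphi_m(r):=\binom{r}{3}+\binom{m-r}{3}$. Since $\varphi_m$ is convex and symmetric about $r=m/2$, its minimum $c_m$ is attained at $r=\lfloor m/2\rfloor$, and a short computation gives $c_m=0$ for $m\le 4$ and $c_m=1,2,5,8$ for $m=5,6,7,8$. For $m\le 4$ there is no bound, and indeed no finite $BR_m$: take $H$ to be the complete bipartite graph between a fixed $\lfloor m/2\rfloor$-subset of $X$ and all of $Y$; then $H$ and $\overline{H}$ are both of the form $K_{a,n}$ with $a\le 2$, hence contain no $K_{3,3}$ for any $n$. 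For $m\in\{5,6,7,8\}$ the bound $n\,c_m\le 4\binom{m}{3}$ gives $n\le 4\binom{m}{3}/c_m\in\{40,40,28,28\}$, so $BR_m(K_{3,3},K_{3,3})\le 41$ for $m=5,6$ and $\le 29$ for $m=7,8$.

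For the lower bounds I would, for each $m$ and the appropriate $n$ (one less than the target), construct a colouring of $K_{m,n}$ with no monochromatic $K_{3,3}$, specifying each vertex $y\in Y$ by its red neighbourhood $N(y)\subseteq X$. Take: for $m=6,\ n=40$, the twenty $3$-subsets of $X$, each used twice; for $m=5,\ n=40$, the ten $2$-subsets and the ten $3$-subsets of $X$, each used twice; for $m=8,\ n=28$, identifying $X$ with $\mathbb{F}_2^{3}$, the fourteen affine planes (the blocks of the Steiner system $S(3,4,8)=\mathrm{AG}(3,2)$), each used twice; for $m=7,\ n=28$, the seven lines of a Fano plane on $X$ together with their seven complements (each a $4$-set), each used twice. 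In every case a red $K_{3,3}$ would require some $T\in\binom{X}{3}$ to be contained in three of the chosen red neighbourhoods; but each triple lies in exactly one affine plane ($m=8$), no $4$-set contains two distinct Fano lines ($m=7$), and for $m=5,6$ the only red neighbourhoods that can contain $T$ are the at most two copies of $T$ itself — so every triple lies inside at most $2$ of the red neighbourhoods, and there is no red $K_{3,3}$. Since in each construction the multiset of red neighbourhoods is invariant under complementation in $X$ (complements of affine planes are affine planes; the complements of the chosen Fano lines are exactly the chosen $4$-sets, and conversely; complementation permutes the $2$- and $3$-subsets of a $5$-set and the $3$-subsets of a $6$-set), the multiset of blue neighbourhoods equals that of the red ones, so the same argument excludes a blue $K_{3,3}$. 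Hence $BR_5,BR_6\ge 41$ and $BR_7,BR_8\ge 29$, and with the upper bounds this gives the stated values.

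The counting argument is routine; the work is in the constructions. One subtlety is the choice of degree profile: $\varphi_m$ is minimised by giving every vertex of $Y$ red-degree $m/2$ when $m=6,8$, but for $m=5$ and $m=7$ the minimum of $\varphi_m$ occurs at two consecutive values of $r$, and one is forced to use a balanced mixture of the two degrees — a construction with only $3$-subsets of $X$, for example, provably fails for $m=7,\ n=28$. The real obstacle, though, is ruling out a monochromatic $K_{3,3}$ whose three $Y$-vertices combine ``small'' and ``large'' labelling sets; controlling this is precisely what forces the use of genuine designs with prescribed $3$-set covering numbers — the Fano plane for $m=7$ and the Steiner quadruple system $S(3,4,8)$ for $m=8$ — rather than arbitrary families of subsets of the right sizes.
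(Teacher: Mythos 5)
Your proposal is correct, but note that this particular statement is not proved in the paper at all: it is quoted from \cite{chartrand2021new} as background, so there is no in-paper proof to compare against. Judged on its own, your argument is sound: the double count $\sum_{y}\binom{r(y)}{3}\le 2\binom{m}{3}$ for each colour, added and bounded below by $n\cdot c_m$ with $c_m=\min_r\bigl(\binom{r}{3}+\binom{m-r}{3}\bigr)=1,2,5,8$ for $m=5,6,7,8$, gives exactly $n\le 40,40,28,28$, hence the stated upper bounds; the trivial split of $X$ handles non-existence for $m\le 4$; and your lower-bound colourings (pairs of $2$- and $3$-subsets for $m=5$, $3$-subsets for $m=6$, the Fano lines plus their complements for $m=7$, the doubled $S(3,4,8)$ blocks for $m=8$) are self-complementary as multisets and cover every triple of $X$ at most twice, so neither colour contains $K_{3,3}$; I checked the incidence counts and each construction is tight. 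This route is genuinely different in flavour from the methods of the present paper (and of the works it cites for such values), which proceed by case analysis on the maximum degree $\Delta(G_X)$, neighbourhood-intersection bounds forced by $K_{2,2}$- or $K_{3,3}$-freeness, and explicit adjacency lists for the extremal colourings; your convexity/counting bound is more uniform and the design-theoretic constructions explain the extremal numbers conceptually, at the cost of needing the Fano plane and the Steiner quadruple system rather than ad hoc lists. One small presentational point: for $m=7$ your parenthetical justification (no $4$-set contains two distinct Fano lines) covers triples that are not lines; for a triple that is a line you should also say that two distinct lines of the Fano plane always meet, so a line is never contained in the complement of another line---with that sentence added, the claim that every triple lies in at most two red neighbourhoods is fully justified.
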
 
 
\begin{theorem}  \cite{rowshan2}
	For each  positive integer $m\geq 2 $, we have:
	\[
	BR_m(K_{2,2},K_{4,4})= \left\lbrace
	\begin{array}{ll}	
		\text{does not exist}, & ~~~~where~~m=2,3,4,\vspace{.2 cm}\\
		26 & ~~~where~~~m=5,\vspace{.2 cm}\\
		22 & ~~where~~~~ m=6,7,\vspace{.2 cm}\\
		16 & ~~~where~~~ m=8,\vspace{.2 cm}\\
		14 & ~~~where~~~ m\in \{9,10\ldots,13\}.\vspace{.2 cm}\\
	\end{array}
	\right.
	\]	
\end{theorem}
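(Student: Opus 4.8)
The plan is to recast the problem as an extremal question about set systems. A subgraph $H\subseteq K_{m,n}$ is the same as a red/blue colouring of $K_{m,n}$ (red $=E(H)$, blue $=E(\overline H)$), and we must force a red $K_{2,2}$ or a blue $K_{4,4}$. Encode each vertex $v$ of the $n$-side by its red neighbourhood $f(v)\subseteq[m]$. Then $H$ has no red $K_{2,2}$ exactly when every pair $\{i,j\}\subseteq[m]$ is contained in at most one $f(v)$, and $\overline H$ has no blue $K_{4,4}$ exactly when every $4$-subset $S\subseteq[m]$ satisfies $|\{v:f(v)\cap S=\emptyset\}|\le 3$ (equivalently, every $(m-4)$-subset of $[m]$ contains at most three of the sets $f(v)$). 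Write $M(m,n)=1$ if such a family of size $n$ exists; then $BR_m(K_{2,2},K_{4,4})=1+\max\{n:M(m,n)=1\}$ whenever the maximum is finite. Two structural features will do most of the bookkeeping: $M$ is monotone under deleting a vertex, and it is symmetric, $M(m,n)=M(n,m)$, since both forbidden graphs are balanced and a valid colouring of $K_{m,n}$ transposes to one of $K_{n,m}$.

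Non-existence for $m\le4$ is then immediate. For $m\le 3$ a $K_{4,4}$ cannot embed at all, so the all-blue colouring avoids both targets for every $n$. For $m=4$ the only $(m-4)$-subset is $\emptyset$, so the blue condition merely forbids four $n$-vertices with empty red neighbourhood; giving every $n$-vertex exactly one red edge leaves no empty red neighbourhood and no repeated pair, hence no red $K_{2,2}$ and no blue $K_{4,4}$, for every $n$. Thus $M(4,n)=1$ for all $n$ and the Ramsey number is undefined.

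For $m\ge5$ the monotonicity and symmetry collapse the table to a handful of extremal instances. It suffices to prove the four lower bounds $M(5,25)=M(7,21)=M(8,15)=M(13,13)=1$ and the four upper bounds $M(5,26)=M(6,22)=M(8,16)=M(9,14)=0$; every remaining entry follows, e.g. $M(7,21)=1\Rightarrow M(6,21)=1$ and $M(6,22)=0\Rightarrow M(7,22)=0$ give $BR_6=BR_7=22$, while $M(13,13)=1\Rightarrow M(m,13)=1$ and $M(9,14)=0\Rightarrow M(m,14)=0$ give $BR_m=14$ for all $9\le m\le13$. The lower bounds are explicit pair-disjoint families. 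Taking the $f(v)$ to be all $\binom72=21$ pairs of $[7]$ gives $M(7,21)=1$: two pairs meet in at most one point, and a $4$-subset is disjoint from exactly the $\binom32=3$ pairs inside its complement. Taking the $13$ lines of $\mathrm{PG}(2,3)$ as the $f(v)$ gives $M(13,13)=1$: two lines meet in one point, and any four lines cover at least $16-\binom42=10$ of the $13$ points, leaving at most three points disjoint from all of them. The families for $M(5,25)$ and $M(8,15)$ are obtained in the same spirit (all pairs of $[5]$ together with three singletons at each point, respectively a suitable packing of pairs and triples on $[8]$), arranged so that every $(m-4)$-subset contains exactly three members.

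The upper bounds are driven by two counting inequalities: no red $K_{2,2}$ forces $\sum_v\binom{|f(v)|}{2}\le\binom m2$, and averaging the blue condition over all $4$-subsets forces $\sum_v\binom{m-|f(v)|}{4}\le 3\binom m4$. For $m=5,6,7$ these two inequalities alone cap $n$ at $N-1$; for example at $m=5$ at least $16$ of the $26$ vertices have $|f(v)|\le1$, each is disjoint from one of the five $4$-subsets of $[5]$, and pigeonhole produces four disjoint from a common $4$-subset, i.e. a blue $K_{4,4}$. The hard part is $m=8$ and $m=9$, where the averaged inequality is slack (for $m=8$ a degree sequence of twelve pairs and five triples meets both inequalities with $n=17$), so the average must be replaced by the genuine per-$4$-subset bound. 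The crux, and the main obstacle, is to show that a pair-disjoint family of the forced size cannot spread its small members so evenly that every $(m-4)$-subset contains at most three of them; I expect this to require a case analysis on the number of sets of each size (for $m=8$, using that at most one set can have size $\ge5$, since two such sets would share a pair) together with a careful local count around the $4$-subsets that are overloaded.
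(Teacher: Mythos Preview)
This theorem is not proved in the paper; it is quoted as a known result with the citation \cite{rowshan2022m1}, and the present paper uses it only as background. There is therefore no ``paper's own proof'' to compare your attempt against.

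On its own merits, your set-system reformulation is correct, and the symmetry $M(m,n)=M(n,m)$ together with the deletion monotonicity cleanly reduces the whole table to the eight extremal values you list. The explicit constructions you do give are valid: the $25$ sets on $[5]$ (ten pairs plus three copies of each singleton), the $21$ pairs on $[7]$, and the $13$ lines of $\mathrm{PG}(2,3)$ all satisfy both conditions, and the double-counting argument you sketch does force $M(5,26)=M(6,22)=0$ (indeed, minimising $\sum_v\binom{m-|f(v)|}{4}$ subject to $\sum_v\binom{|f(v)|}{2}\le\binom m2$ and $\sum_v 1=n$ already exceeds $3\binom m4$ in those cases).

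However, the proposal is explicitly incomplete at exactly the hardest entries. First, you do not actually exhibit a family witnessing $M(8,15)=1$; ``a suitable packing of pairs and triples on $[8]$'' is not a construction, and the natural attempts (e.g.\ the seven Fano lines on $[7]$ augmented by the seven pairs $\{i,8\}$ and one extra set) fail the $4$-subset test when $8\notin S$, since the complement then contains three pairs $\{i,8\}$ already. Second, you yourself observe that the averaged inequality is slack at $m=8$ and $m=9$, and you defer $M(8,16)=0$ and $M(9,14)=0$ to an unspecified case analysis. Since $BR_8=16$ and $BR_m=14$ for $9\le m\le 13$ rest entirely on these three facts, the proposal as it stands does not establish the theorem.
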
 
	In this paper,  we compute the exact value of $BR_m(K_{2,2}, K_{5,5})$ for some $m\geq 2$ as follows.
	\begin{theorem}\label{M.th}[Main results]
		For each $m\in \{1,2,\ldots,8\}$, we have:
		\[
		BR_m(K_{2,2},K_{5,5})= \left\lbrace
		\begin{array}{ll}	
			\text{does not exist}, & ~~~where~~m=2,3,4,5,\vspace{.2 cm}\\
			40 & ~~~where~~~m=6,\vspace{.2 cm}\\
			30 & ~~~where~~~ m=7,8.\vspace{.2 cm}\\
		 
		\end{array}
		\right.
		\]	
	\end{theorem}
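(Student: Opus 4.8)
The plan is to work throughout with the following reformulation. Colour an edge of $K_{m,n}$ red if it lies in $H$ and blue otherwise, and call the colouring \emph{good} if it contains no red $K_{2,2}$ and no blue $K_{5,5}$; then $BR_m(K_{2,2},K_{5,5})$ is the least $n$ for which $K_{m,n}$ admits no good colouring, a threshold that is well defined because the property is monotone in $n$. Write $[m]$ for the small side and, for each vertex $u$ of the $n$-side, let $N(u)\subseteq[m]$ be its red neighbourhood. Two equivalences drive everything: (i) \emph{no red $K_{2,2}$} holds iff every $2$-subset of $[m]$ is contained in $N(u)$ for at most one $u$, and in particular $\sum_u\binom{|N(u)|}{2}\le\binom m2$; (ii) \emph{no blue $K_{5,5}$} holds iff for every $(m-5)$-subset $T\subseteq[m]$ there are at most $4$ vertices $u$ with $N(u)\subseteq T$, and summing this over all such $T$ gives $\sum_u\binom{m-|N(u)|}{(m-5)-|N(u)|}\le 4\binom{m}{m-5}$. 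I will classify the $n$-side vertices by red degree, writing $n_0$ for the number of red-isolated ones, $b_v$ (for $v\in[m]$) for the number of red-degree-$1$ vertices with neighbour $v$, and $n_k$ for the number of vertices of red degree exactly $k$.

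For $m\le 4$ the small side has fewer than $5$ vertices, so no blue $K_{5,5}$ can occur, and the all-blue colouring is good for every $n$; hence $BR_m(K_{2,2},K_{5,5})$ does not exist. For $m=5$, let $H$ consist of all edges from the $n$-side to one fixed vertex of the small side: then $H$ is a union of stars and so is $K_{2,2}$-free, and no $n$-side vertex is red-isolated, so (a blue $K_{5,5}$ in $K_{5,n}$ would require $5$ red-isolated $n$-side vertices, as all $5$ small-side vertices must be used) there is no blue $K_{5,5}$. Thus the colouring is good for every $n$ and $BR_5$ does not exist. This settles $m=2,3,4,5$ (and likewise $m=1$).

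For $m=6$ we have $m-5=1$, so (ii) reads $n_0+b_v\le 4$ for every $v\in[6]$, and summing over $v$ gives $6n_0+\sum_v b_v\le 24$. By (i), each $n$-side vertex of red degree $\ge 2$ contributes at least one $2$-subset of $[6]$ and these are pairwise distinct, so there are at most $\binom 62=15$ such vertices. Hence any good colouring of $K_{6,n}$ satisfies $n=n_0+\sum_v b_v+(\text{number of vertices of red degree}\ge 2)\le (24-5n_0)+15\le 39$, giving $BR_6\le 40$. Conversely, a good colouring of $K_{6,39}$ is obtained by attaching $4$ red-degree-$1$ vertices to each of the $6$ small-side vertices ($24$ vertices in all) together with $15$ red-degree-$2$ vertices whose red neighbourhoods run over the $15$ pairs of $[6]$, one each: no $2$-subset is repeated, so there is no red $K_{2,2}$, and $n_0+b_v=4$ for each $v$, so there is no blue $K_{5,5}$. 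Therefore $BR_6=40$.

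The cases $m=7$ and $m=8$ use the same bookkeeping, but (ii) now genuinely couples the parameters: for $m=7$ it reads $n_0+b_i+b_j+d_{ij}\le 4$ for every pair $\{i,j\}\subseteq[7]$, where $d_{ij}\in\{0,1\}$ counts red-degree-$2$ vertices with neighbourhood $\{i,j\}$, and for $m=8$ the analogous inequality ranges over the $3$-subsets of $[8]$. For the upper bound $n\le 29$ (so $BR_m\le 30$) I will combine three facts---the inequality of (i), the summed inequality of (ii), and $\sum_u 1=n$---through a suitable nonnegative linear combination that pins the red-degree sequence down to a small number of near-extremal shapes, roughly: almost every $n$-side vertex has red degree exactly $2$ and these neighbourhoods exhaust all pairs of $[m]$, with only a handful of red-degree-$\le 1$ vertices left over. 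Each surviving shape is then eliminated by hand: for $m=8$ the only one that passes the counting has exactly two red-degree-$1$ vertices together with all $28$ pairs realised once, and then those two red-degree-$1$ neighbours lie in a common $3$-set $T$, whence at least $5$ vertices $u$ have $N(u)\subseteq T$ and we get a blue $K_{5,5}$; for $m=7$ one argues similarly, organising the leftover cases by the value of $n_0$ and by $\max_v b_v$ and in each case either exhibiting a pair $\{i,j\}$ with $n_0+b_i+b_j+d_{ij}\ge 5$ or contradicting the pair budget $\binom 72=21$. The matching lower bounds are explicit good colourings of $K_{7,29}$ and of $K_{8,29}$: for $m=7$ take all $21$ red-degree-$2$ vertices (neighbourhoods $=$ all pairs of $[7]$) together with $8$ red-degree-$1$ vertices arranged so that $\max_v b_v=2$; for $m=8$ take all $28$ red-degree-$2$ vertices together with a single red-degree-$1$ vertex. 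In both, every pair of the small side is used exactly once (no red $K_{2,2}$) and every $(m-5)$-set $T$ captures at most $4$ vertices $u$ with $N(u)\subseteq T$ (no blue $K_{5,5}$). Hence $BR_7=BR_8=30$. I expect the finite case analysis for $m\in\{7,8\}$ to be the crux: one must check that \emph{every} red-degree distribution compatible with the two global inequalities still forces a red $K_{2,2}$ or a blue $K_{5,5}$, and the near-extremal distributions require individual attention.
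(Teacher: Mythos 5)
Your route is genuinely different from the paper's. The paper works on the small side: after a lemma showing that $\Delta(G_X)\ge 10$ is immediately fatal, it runs a case analysis on $\Delta(G_X)\in\{8,9\}$ for $m=6$ and on $\Delta(G_X)\in\{6,7,8,9\}$ for $m=7$, tracking intersections of the neighbourhoods $N_G(x_i)$ claim by claim, and then gets $m=8$ from $m=7$ together with the explicit $K_{8,29}$ colouring. You dualize to the large side and double count: no red $K_{2,2}$ means the column neighbourhoods repeat no pair of $[m]$ (so at most $\binom{m}{2}$ columns of red degree $\ge 2$), and no blue $K_{5,5}$ means every $(m-5)$-set contains at most $4$ column neighbourhoods. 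Your nonexistence argument for $m\le 5$ is the paper's $K_{1,t}$ construction; your $K_{6,39}$ and $K_{8,29}$ colourings are exactly the paper's constructions read off by column degrees ($24$ pendants plus all $15$ pairs, resp.\ all $28$ pairs plus one pendant), and your three-line upper bound for $m=6$ (from $n_0+b_v\le 4$ summed over $v$ plus the pair budget $15$) is complete, correct, and much shorter than the paper's Claims~3.4--3.5. The $m=8$ reduction at $n=30$ to the single shape ``$28$ pairs plus two pendants'', killed by a common $3$-set, is also sound.

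The gap is the upper bound for $m=7$, which is the only one actually needed (deleting a row shows $BR_8\le BR_7$) and is precisely where the paper spends most of its effort (its Case 1--3 analysis with Claims~3.7--3.14). For $m=7$, unlike $m=8$, your two summed inequalities $21n_0+6n_1+n_2\le 84$ and $n_2+3n_3+\cdots\le 21$ do \emph{not} pin the degree sequence down: they admit, for instance, $n_1=10$, $n_2=21$, i.e.\ $n=31$. So you must invoke the individual constraints $n_0+b_i+b_j+d_{ij}\le 4$ and run a genuine enumeration. That enumeration does close at $29$ --- e.g.\ if all $21$ pairs occur as degree-$2$ columns then $b_i+b_j\le 3-n_0$ forces $\sum_v b_v\le 8$; if exactly $20$ occur, the pair budget excludes degree-$\ge 3$ columns and $\sum_v b_v\le 9$; a degree-$3$ column ($n_2\le 18$) allows $b=2$ on its three rows but only $b\le 1$ elsewhere, again totalling at most $29$; and $n_0\ge 1$ only tightens matters --- but none of this is in your text: ``each surviving shape is then eliminated by hand'' and ``one argues similarly'' is a plan, not a proof. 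The approach is viable and, once written out, would be shorter and more transparent than the paper's argument, but as it stands the crux case $m=7$ is asserted rather than proved.
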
 

	\section{Preparations}  Assume that $G[V, V']$ (or simply $[V, V' ]$), is a bipartite graph with bipartition sets $V$ and $V'$. Let $E(G[ W, W'])$, denotes the edge set of $G[ W, W']$. We use $\Delta (G_V)$ and $\Delta (G_{V'})$ to denote the maximum degree of  vertices in  part $V$ and $V'$ of $G$, respectively. The degree of a vertex $v\in V(G)$, is denoted by $\deg_G(v)$. For each $v\in V(V')$, $N_G(v) =\{ u\in V'(V), ~~vu\in E(G)\}$. For given  graphs $G$, $H$, and $F$, we say $G$ is $2$-colorable to $(H, F)$, if there is a  subgraph $G'$ of $G$, where $H\nsubseteq G'$ and $F\nsubseteq \overline{G'}$. We use $G\rightarrow (H, F)$, to show that  $G$  is $2$-colorable to $(H, F)$.  To simplify, we use $[n]$ instead of $\{1,,\ldots, n\}$.

	Alex F. Collins et al.,  have proven  the following theorem  \cite{collins2016zarankiewicz}.
	\begin{theorem}\label{th1}\cite{collins2016zarankiewicz} $BR(K_{2,2}, K_{5,5})=17$.
	\end{theorem}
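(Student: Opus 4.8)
The plan is to prove the two bounds $BR(K_{2,2},K_{5,5})\le 17$ and $BR(K_{2,2},K_{5,5})\ge 17$ separately, working throughout with the reformulation in which the first colour class is a ``red'' graph $G\subseteq K_{b,b}$ and the second is its bipartite complement $\overline{G}$. Avoiding a red $K_{2,2}$ is exactly the condition that $G$ be $C_4$-free, i.e.\ any two vertices on one side have at most one common neighbour in $G$; avoiding a blue $K_{5,5}$ says that $\overline{G}$ contains no $K_{5,5}$, equivalently that every choice of $5$ vertices on the left and $5$ on the right spans at least one edge of $G$. In these terms the theorem asserts that a $C_4$-free $G\subseteq K_{16,16}$ with $K_{5,5}$-free complement exists, while no such $G$ fits inside $K_{17,17}$. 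A convenient pivot is that $e(G)+e(\overline{G})=b^2$, so both color classes are simultaneously constrained by Zarankiewicz-type bounds: $e(G)\le z(b,b;2,2)$ and $e(\overline{G})\le z(b,b;5,5)$.

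For the lower bound $BR\ge 17$ I would exhibit an explicit colouring of $K_{16,16}$. The natural route is to take for $G$ a near-regular $C_4$-free graph of the appropriate density (red degrees around $4$) coming from a structured incidence/algebraic construction, so that the neighbourhoods overlap in at most one point and are spread as evenly as possible, and then verify directly that $\overline{G}$ is $K_{5,5}$-free. The condition to check is a covering property: for every five left vertices $u_1,\dots,u_5$ the set $R\setminus\bigcup_i N_G(u_i)$ of common blue neighbours must have size at most $4$, i.e.\ $|\bigcup_i N_G(u_i)|\ge 12$, and symmetrically from the right. Designing the neighbourhoods so that no five of them fail to cover twelve vertices of the opposite side is the delicate part of the construction, and I would organise it around a partial linear space on $16$ points whose lines realise the red edges.

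For the upper bound $BR\le 17$ I would argue by contradiction: assume $G\subseteq K_{17,17}$ is $C_4$-free with $\overline{G}$ also $K_{5,5}$-free, and derive a contradiction. The first step is the Kővári–Sós–Turán/Reiman double count, which from $C_4$-freeness gives $\sum_{w}\binom{\deg_G(w)}{2}\le\binom{17}{2}=136$ on each side, bounding $e(G)$ by $z(17,17;2,2)$ and forcing the red degree sequence to be small and nearly regular. A blue $K_{5,5}$ is then the statement that some five right vertices $w_1,\dots,w_5$ satisfy $|\bigcup_j N_G(w_j)|\le 12$, leaving at least five common blue neighbours on the left. Since $C_4$-freeness caps every pairwise intersection $|N_G(w_i)\cap N_G(w_j)|$ at $1$, the union of five red neighbourhoods of size at most $4$ lies between $\sum_j\deg_G(w_j)-\binom{5}{2}$ and $\sum_j\deg_G(w_j)$, so the search reduces to locating five right vertices whose neighbourhoods mutually overlap enough to push the union down to $12$.

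I expect the crux, and the main obstacle, to be exactly this near-regular regime. When all red degrees hover around $4$, neither selecting the five smallest-degree vertices nor the global inequality $z(17,17;2,2)+z(17,17;5,5)<17^2$ is strong enough — a KST estimate places that sum essentially at the threshold $289$, so pure edge counting cannot close the gap. The proof must instead analyse the intersection pattern of the red neighbourhoods as a linear hypergraph and show it always contains a suitable ``mutually overlapping'' five-set. I would handle this by a case analysis organised on the red degree sequence, reducing to finitely many tight configurations of a $C_4$-free graph of near-extremal size on $17+17$ vertices, and finishing those cases by hand or, if necessary, by computer-assisted enumeration; establishing (or invoking) the exact values of $z(17,17;2,2)$ and $z(17,17;5,5)$ that feed both bounds is the quantitative heart of the argument.
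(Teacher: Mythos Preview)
The paper does not prove this statement at all: Theorem~\ref{th1} is quoted verbatim from \cite{collins2016zarankiewicz} and used only as background, with no argument supplied in the present paper. There is therefore no ``paper's own proof'' to compare your proposal against.

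As for the proposal itself, it is a reasonable outline of how one \emph{might} attack $BR(K_{2,2},K_{5,5})=17$, but it is not a proof. For the lower bound you describe the shape of a construction (a $C_4$-free bipartite graph on $16+16$ vertices whose complement is $K_{5,5}$-free) without actually exhibiting one; ``organise it around a partial linear space on $16$ points'' is a direction, not a verification. For the upper bound you correctly identify that the K\H{o}v\'ari--S\'os--Tur\'an inequality alone is not sharp enough in the near-regular regime and that one must analyse the intersection pattern of neighbourhoods, but you then defer the core step to an unspecified case analysis ``by hand or, if necessary, by computer-assisted enumeration.'' That is precisely the hard part, and leaving it open means the argument has a genuine gap. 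Indeed, the result in \cite{collins2016zarankiewicz} was obtained via exact Zarankiewicz numbers established with substantial computation, so your instinct that the endgame is delicate and may require machine assistance is accurate---but acknowledging this is not the same as carrying it out.
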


	\begin{lemma}\label{l1} Suppose  that $(X=\{x_1,\ldots,x_m\},Y=\{y_1,\ldots ,y_{n}\})$, where $m\geq 6$ and $n\geq 10$ are the partition  sets of  $K=K_{m,n}$.	Let $G$ is a subgraph of $K_{m,n}$. If $\Delta (G_X)\geq 10$, then either $K_{2,2} \subseteq G$ or $K_{5,5} \subseteq \overline{G}$.
	\end{lemma}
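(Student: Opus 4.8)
The plan is to prove the contrapositive of the second alternative: assume $K_{2,2}\nsubseteq G$ and deduce $K_{5,5}\subseteq\overline{G}$. First I would fix a vertex $x_1\in X$ with $\deg_G(x_1)\geq 10$, and after relabelling the vertices of $Y$ assume $\{y_1,\ldots,y_{10}\}\subseteq N_G(x_1)$ (this is where $n\geq 10$ is used). The one structural input needed is the following immediate consequence of $K_{2,2}$-freeness: for every $i\in\{2,\ldots,m\}$, the vertex $x_i$ has \emph{at most one} neighbour among $y_1,\ldots,y_{10}$, because two such common neighbours together with $x_1$ would span a $K_{2,2}$ in $G$.

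The remaining step is a counting argument. Since $m\geq 6$, I can pick five vertices $x_{i_1},\ldots,x_{i_5}$ from $\{x_2,\ldots,x_m\}$. By the observation above, $\bigl|\bigcup_{t=1}^{5}\bigl(N_G(x_{i_t})\cap\{y_1,\ldots,y_{10}\}\bigr)\bigr|\leq 5$, so at least five of the vertices $y_1,\ldots,y_{10}$, say $y_{j_1},\ldots,y_{j_5}$, are simultaneously non-neighbours in $G$ of all of $x_{i_1},\ldots,x_{i_5}$. Then none of the $25$ edges between $\{x_{i_1},\ldots,x_{i_5}\}$ and $\{y_{j_1},\ldots,y_{j_5}\}$ belongs to $G$, hence they all belong to $\overline{G}$, which gives $K_{5,5}\subseteq\overline{G}$.

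There is no genuinely hard step here: the lemma is a pigeonhole statement, and the only thing that must be checked with any care is that the hypotheses $m\geq 6$ and $n\geq 10$ are exactly what make the numerology work (one needs $n\geq 10$ for the neighbourhood $\{y_1,\ldots,y_{10}\}$ to exist, and $m\geq 6$ to supply five \emph{further} vertices of $X$ beyond $x_1$). The value of the lemma is conceptual rather than computational: it shows that in any $K_{2,2}$-free subgraph of $K_{m,n}$ a single vertex of $X$ of degree at least $10$ already forces a $K_{5,5}$ in the complement, so that in the main argument one may from now on assume $\Delta(G_X)\leq 9$ (and, by symmetry, control the degrees on the $Y$-side as well).
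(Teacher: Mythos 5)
Your proof is correct and follows essentially the same route as the paper: fix a vertex of $X$ with ten neighbours $Y'\subseteq Y$, use $K_{2,2}$-freeness to bound each other vertex of $X$ to at most one neighbour in $Y'$, and then pick five further vertices of $X$ so that at least five vertices of $Y'$ are non-adjacent to all of them, yielding $K_{5,5}\subseteq\overline{G}$. Your write-up merely makes explicit the pigeonhole count that the paper leaves as ``one can check.''
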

	\begin{proof}
		Without loss of generality	(W.l.g), let $\Delta (G_X)=10$ and  $N_G(x)=Y'$, where  $|Y'|=10$ and $K_{2,2} \nsubseteq G$.  Therefore, 	 $|N_G(x')\cap Y'|\leq 1$ for each $x'\in X\setminus\{x\}$. Since $|X|\geq 6$ and $|Y'|= 10$,  one can check that  $K_{5,5} \subseteq \overline{G}[X\setminus\{x\}, Y']$.
	\end{proof}
	\section{\bf Proof of the main results}
To prove   Theorem \ref{M.th}, we need  the following theorems. 
\begin{theorem}\label{th0} For each $m\in \{2,3,4,5\}$, the number $BR_m(K_{2,2}, K_{5,5})$ does not exist.
 
\end{theorem}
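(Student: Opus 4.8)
The plan is to show, for each fixed $m \in \{2,3,4,5\}$, that no matter how large we take $n$, there is a subgraph $H$ of $K_{m,n}$ with $K_{2,2} \not\subseteq H$ and $K_{5,5} \not\subseteq \overline{H}$; this exactly says $K_{m,n} \rightarrow (K_{2,2},K_{5,5})$ for all $n$, hence $BR_m(K_{2,2},K_{5,5})$ does not exist. The key observation is an asymmetry between the two sides of the bipartition: forbidding $K_{5,5}$ in $\overline{H}$ is a constraint that, when $m \le 5$, can be met trivially by making sure $\overline{H}$ never contains $5$ vertices on the $X$-side adjacent to a common $5$-set — and since $|X| = m \le 5$ we can even afford to guarantee that \emph{every} vertex of $X$ has small degree in $\overline{H}$, i.e. large degree in $H$; the real work is then to keep $H$ free of $K_{2,2}$.

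First I would set up the construction. Take $H$ to be a bipartite graph on $(X,Y)$ with $|X| = m$ chosen so that $K_{2,2} \not\subseteq H$, equivalently any two vertices of $X$ have at most one common neighbour in $Y$. A clean way to do this: pick $H$ so that for each vertex $x \in X$ the neighbourhood $N_H(x) \subseteq Y$ is large but the pairwise intersections $|N_H(x) \cap N_H(x')|$ are all at most $1$. With only $m \le 5$ vertices on the $X$-side this is easy to arrange for arbitrarily large $n$ — for instance, give $x_i$ a neighbourhood consisting of one ``private'' block of $Y$-vertices of size about $n/m$ plus, for each pair $\{i,j\}$, a single shared vertex; then $|N_H(x_i) \cap N_H(x_j)| = 1$ for all $i \ne j$, so $K_{2,2} \not\subseteq H$, while $\deg_H(x_i) \ge n/m - O(1)$.

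Next I would check the complement side. In $\overline{H}$, vertex $x_i$ has degree $n - \deg_H(x_i) \le n - n/m + O(1)$; more to the point, $\bigcap_{i} N_{\overline H}(x_i)$ has size at most $n - \sum_i \deg_H(x_i) + (\text{overlaps})$, which with the above construction is $O(1)$, certainly less than $5$. But $K_{5,5} \subseteq \overline{H}$ would require $5$ vertices of $X$ with a common set of $5$ neighbours in $\overline{H}$; when $m \le 5$ this forces all of $X$ to be used, and then the common neighbourhood in $\overline H$ is exactly $\bigcap_{i=1}^m N_{\overline H}(x_i)$ (for $m=5$), or is empty because $|X| = m < 5$ (for $m = 2,3,4$). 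For $m < 5$ the non-existence is immediate since $\overline H \subseteq K_{m,n}$ cannot contain $K_{5,5}$ at all. For $m = 5$ I just need the construction to drive $|\bigcap_{i=1}^5 N_{\overline H}(x_i)|$ below $5$, which the private-block construction does for all large $n$.

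The only genuine obstacle is the boundary case $m = 5$: there one must simultaneously (i) keep all pairwise $H$-neighbourhood intersections $\le 1$ and (ii) keep the five-fold $\overline H$-neighbourhood intersection $\le 4$. I would handle this by making the five private blocks partition most of $Y$ (so their $\overline H$-neighbourhoods have nearly empty common intersection) and only sprinkling in the $\binom{5}{2} = 10$ shared vertices; one then verifies (i) by construction and (ii) by a one-line count. This works for every $n$, so the Ramsey number fails to exist for each $m \in \{2,3,4,5\}$, completing the proof.
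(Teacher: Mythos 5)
Your proposal is correct, but it takes a genuinely different and heavier route than the paper. The paper disposes of all four values of $m$ with a single one-line construction: for arbitrary $t$, take $G\cong K_{1,t}$ inside $K_{m,t}$ (one $X$-vertex joined to all of $Y$, the other $X$-vertices isolated); then $G$ trivially contains no $K_{2,2}$, and $\overline{G}\subseteq K_{m-1,t}$ with $m-1\leq 4$, so $\overline{G}$ cannot contain $K_{5,5}$. You instead split into cases: for $m\in\{2,3,4\}$ you observe (correctly) that $\overline{H}\subseteq K_{m,n}$ can never contain $K_{5,5}$, so any $K_{2,2}$-free $H$ works, and for $m=5$ you build a $K_{2,2}$-free graph whose neighbourhoods cover $Y$ (five private blocks plus the $\binom{5}{2}$ pairwise shared vertices), so that $\bigcap_{i=1}^{5}N_{\overline{H}}(x_i)=\emptyset$ and no $K_{5,5}$ can appear in $\overline{H}$. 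That construction is sound --- the pairwise intersections are exactly one vertex, and the covering kills the complement copy --- and it is essentially the same ``pairwise intersection one'' design the paper later uses for the lower bounds in Theorems \ref{th2} and \ref{th3}, so it would also generalize to lower-bound arguments; but for the non-existence statement it is more work than needed, since you did not notice that making a single $X$-vertex complete to $Y$ reduces the complement to at most four usable $X$-vertices for every $m\leq 5$ simultaneously. Two minor points to tighten if you keep your route: the intermediate inclusion--exclusion estimate (``$O(1)$, certainly less than $5$'') as stated only bounds the uncovered part of $Y$ by $\binom{5}{2}=10$, so you should rely on the exact partition version (uncovered part empty) rather than the count; and the construction requires $n$ at least about $10$, so the claim should be ``for all sufficiently large $n$,'' which suffices for non-existence since the Ramsey property is monotone in $n$.
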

\begin{proof}
Suppose that $m\in \{2,3,4,5\}$. For an arbitrary integer $t \geq 5$, set $K=K_{m,t}$ and let $G$ be a subgraph of $K$, such that $G\cong K_{1,t}$. Therefore, we have $\overline{G}\subseteq K_{m-1,t}$. Hence, neither  $K_{2,2}\subseteq G$ nor   $K_{5,5}\subseteq \overline{G}$. Which means that for each $m\in \{2,3,4,5\}$, the number $BR_m(K_{2,2}, K_{5,5})$ does not exist.
 
\end{proof}
	In the following theorem, we compute the size of $BR_m(K_{2,2}, K_{5,5})$ for $m=6$.
	\begin{theorem}\label{th2}
		$BR_6(K_{2,2}, K_{5,5})=40$.
	\end{theorem}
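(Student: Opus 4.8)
The plan is to establish the equality $BR_6(K_{2,2},K_{5,5})=40$ in the usual two directions: a lower bound $BR_6(K_{2,2},K_{5,5})\geq 40$ obtained by constructing a subgraph $G$ of $K_{6,39}$ with $K_{2,2}\nsubseteq G$ and $K_{5,5}\nsubseteq\overline{G}$, and an upper bound $BR_6(K_{2,2},K_{5,5})\leq 40$ showing that every subgraph $G$ of $K_{6,40}$ forces one of the two monochromatic copies. For the lower bound I would exploit the known extremal/Zarankiewicz-type structure behind $BR(K_{2,2},K_{5,5})=17$ (Theorem \ref{th1}): a $K_{2,2}$-free bipartite graph on $6$ vertices on one side can have each vertex of $X$ adjacent to up to $9$ vertices of $Y$ with pairwise intersections of size at most $1$ (so it behaves like a partial projective-plane / packing incidence structure). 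The idea is to pick $G\subseteq K_{6,39}$ so that $G$ is $K_{2,2}$-free while simultaneously $\overline{G}$ has no $K_{5,5}$; since $\overline{G}\subseteq K_{6,39}$ lives on only $6$ rows, avoiding $K_{5,5}$ in $\overline{G}$ means that no $5$ rows may have $5$ common non-neighbours, i.e. for every $5$-subset $X'\subseteq X$ the set of columns missing all of $G[X',Y]$ has size at most $4$. Balancing these two constraints pins $n$ at exactly $39$; I would write down an explicit incidence pattern (each of the $\binom{6}{5}=6$ five-subsets "covered" an appropriate number of times) realizing this.

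For the upper bound, let $G\subseteq K_{6,40}$ with $K_{2,2}\nsubseteq G$; I must produce $K_{5,5}\subseteq\overline{G}$. By Lemma \ref{l1} we may assume $\Delta(G_X)\leq 9$, so $\sum_{x\in X}\deg_G(x)\leq 54$, hence $|E(G)|\leq 54$. Dually, $|E(\overline{G})|\geq 6\cdot 40-54=186$, so the average column-degree in $\overline{G}$ is at least $186/40>4$; in fact at least $186-4\cdot 40=26$ columns have all $6$ rows as non-neighbours, i.e. are isolated in $G$. More carefully, since $G$ is $K_{2,2}$-free on $6$ rows, any two rows share at most one common $G$-neighbour, so the number of columns that are $G$-neighbours of at least two rows is at most $\binom{6}{2}=15$; thus at least $40-\big(\text{columns in }\leq 1\text{ row}\big)$ forces many columns of $\overline{G}$-degree $\geq 5$, and I then extract $5$ columns all non-adjacent to a common $5$-set of rows by a counting/pigeonhole argument over the $6$ rows (each "bad" column kills at most one $5$-subset of rows, and there are only $6$ such subsets). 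Quantitatively: a column of $\overline{G}$-degree $6$ contributes to all $6$ of the $5$-subsets, a column of $\overline{G}$-degree $5$ to exactly one; so if $c_6$ and $c_5$ denote their counts, the total incidences with $5$-subsets is $6c_6+c_5$, and by averaging some $5$-subset $X'$ has at least $(6c_6+c_5)/6$ columns avoiding all of $X'$ in $G$; showing this is $\geq 5$ gives the $K_{5,5}$.

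The main obstacle is the tight bookkeeping in the upper bound: the bound $|E(G)|\leq 54$ from Lemma \ref{l1} alone is not obviously enough, because in the worst case $G$'s edges could be distributed so that many columns have $\overline{G}$-degree exactly $4$ (touching all but one row each) while the "shared-neighbour" budget $\binom{6}{2}=15$ is used efficiently — I expect one needs the $K_{2,2}$-freeness more globally (e.g. that the $6\times 40$ bipartite adjacency matrix of $G$ has no $2\times 2$ all-ones submatrix bounds $|E(G)|$ via the Kővári–Sós–Turán / Zarankiewicz function $z(6,40;2,2)$, which is smaller than $54$ when $40$ is large relative to $6$) to push the count of high-$\overline{G}$-degree columns over the threshold. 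So the crux is: combine $\Delta(G_X)\le 9$ with the $K_{2,2}$-free structural constraint to show that strictly more than $4$ columns are non-adjacent to some common $5$-subset of $X$, and separately verify the lower-bound construction on $K_{6,39}$ is consistent with both constraints — this boundary case $m=6$ is where the two bounds just barely meet at $n=40$, so no slack is available and the estimates must be carried out exactly rather than asymptotically.
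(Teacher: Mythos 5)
Your upper-bound skeleton is genuinely different from the paper's (which assumes $K_{2,2}\nsubseteq G$, reduces to $\Delta(G_X)\in\{8,9\}$ via Lemma \ref{l1}, and then does a long case analysis showing some five rows cover at most $35$ columns), but as written it does not close. Two of your quantitative steps are wrong or insufficient: the claim that at least $186-4\cdot 40=26$ columns are isolated in $G$ does not follow from $|E(G)|\le 54$ (all $40$ columns could have $G$-degree $1$ or $2$), and the averaging bound $(6c_6+c_5)/6\ge 5$ fails in the extremal case $c_6=0$, $c_5=25$, where the average is only $25/6$. You then leave the ``crux'' open, worrying that many columns could have $\overline{G}$-degree exactly $4$ --- but your own pair-counting observation already rules that out: since each of the $\binom{6}{2}=15$ row-pairs has at most one common $G$-neighbour and every column of $G$-degree at least $2$ occupies at least one pair, at most $15$ columns have $G$-degree $\ge 2$, so at least $40-15=25$ columns have $G$-degree $\le 1$, hence $\overline{G}$-degree $\ge 5$. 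Assign each such column a $5$-subset of $X$ it misses; there are only $6$ such subsets, so by the integer pigeonhole some subset receives at least $\lceil 25/6\rceil=5$ columns, giving $K_{5,5}\subseteq\overline{G}$. Finished this way, your argument is actually shorter and cleaner than the paper's case analysis (it does not even need Lemma \ref{l1} or the degree bound $\Delta\le 9$); but you must replace the false ``$26$ isolated columns'' step and the strict averaging by this ceiling/pigeonhole count.

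The lower bound is also incomplete as proposed: for a Ramsey lower bound the explicit construction is the whole content, and you only describe the constraints it must satisfy. The structure you describe does exist and is exactly what the paper exhibits in (A1)--(A6): six $9$-sets $Y_1,\dots,Y_6\subseteq Y$ with $|Y_i\cap Y_j|=1$ for all $i\ne j$, so that the $15=\binom{6}{2}$ intersection columns have degree $2$, each row has $4$ private columns, $|Y|=15+24=39$, every union of five neighbourhoods has size $35$, and hence each $5$-subset of $X$ misses only $4$ columns --- no $K_{2,2}$ in $G$ and no $K_{5,5}$ in $\overline{G}$. You should write such a system down (or prove its existence) rather than assert it; with that and the corrected pigeonhole count, your route yields the theorem by a genuinely simpler argument than the paper's.
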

	\begin{proof}
		Suppose  that $(X=\{x_1,\ldots,x_6\},Y=\{y_1,y_2,\ldots ,y_{39}\})$ are the partition  sets of  $K=K_{6,39}$. Let $G\subseteq K$, such that   $N_G(x_i)=Y_i$ is as follows.
		\begin{itemize}
		 	
			\item{\bf (A1):} $Y_1=\{y_1,y_2,\ldots, y_9\}$.	
			\item{\bf (A2):} $Y_2=\{y_1,y_{10}, y_{11}\ldots, y_{17}\}$.
			\item{\bf (A3):} $Y_3=\{y_2,y_{10}, y_{18},y_{19}\ldots, y_{24}\}$.
			\item{\bf (A4):} $Y_4=\{y_3,y_{11}, y_{18}, y_{25},\ldots, y_{30}\}$.
			\item{\bf (A5):} $Y_5=\{y_4,y_{12}, y_{19}, y_{25}, y_{31}, y_{32},\ldots, y_{35}\}$.
			\item{\bf (A6):} $Y_6=\{y_5,y_{13}, y_{20}, y_{26}, y_{31}, y_{36},\ldots, y_{39}\}$.
		\end{itemize}
		Now, for each $i,j\in[6]$, by $(Ai)$ and $(Aj)$, it can be checked that  $|N_G(x_i)\cap N_G(x_j)|=1$,  and $|\cup_{j=1, j\neq i}^{j=6} N_G(x_j)|=35$. Therefore, $K_{2,2}\nsubseteq G$ and  $K_{5,5}\nsubseteq \overline{G}[X\setminus\{x_i\}, Y]$.  Which means that  $BR_5(K_{2,2}, K_{5,5})\geq 40$. 
		
		Now, suppose that $(X=\{x_1,\ldots, x_6\},Y=\{y_1,\ldots ,y_{40}\})$ are the partition  sets of  $K=K_{6,40}$. Suppose that $G$ is a subgraph of $K$, so that $K_{2,2} \nsubseteq G$. Consider $\Delta=\Delta (G_X)$. One can suppose that $\Delta\in \{8,9\}$. Otherwise  if $\Delta\geq 10$,  by Lemma \ref{l1} the proof is complete. And  if $\Delta\leq 7$, using the fact that $|Y|=40$,  it is clear that $K_{5,5}\subseteq \overline{G}$. Now, we have the following claims.
		\begin{claim}\label{c1}
			If	$ \Delta=8$, then  $K_{5,5}\subseteq \overline{G}$.
		\end{claim}	
		\begin{proof}[Proof of Claim \ref{c1}]
			W.l.g, let $Y_1=N_G(x_1)=\{y_1,\ldots,y_8\}$. As $K_{2,2} \nsubseteq G$,  we have $|N_G(x_i)\cap N_G(x_j)|\leq 1$ for each $i,j\in [6]$. Also, it can be checked that $|N_G(x)\cap Y_1|= 1$ for at least four members of $X\setminus \{x_1\}$, otherwise $K_{5,5}\subseteq \overline{G}[X,Y_1]$. W.l.g,  let $|N_G(x)\cap Y_1|= 1$ for each $x_i\in X'=\{x_2,x_3,x_4,x_5\}$.  If there is a member $x $ of $X'$, such that $|N_G(x)\cap Y\setminus Y_1|\leq 6$, then one can check that  $|\cup_{j=1}^{j=5} N_G(x_j)|\leq 35$. Hence, as $|Y|=40$,  it is clear that $K_{5,5}\subseteq \overline{G}[X\setminus\{x_6\}, Y]$.  So, let $|N_G(x)\cap (Y\setminus Y_1)|=7$ for each $x\in X'$. 	W.l.g, let $Y_2=N_G(x_2)=\{y_1, y_{9}, y_{11}\ldots,y_{15}\}$. Hence, there are at least two members $\{x_3,x_4\}$ of $X'\setminus \{x_2\}$, so that $|N_G(x_i)\cap Y_2\setminus \{y_1\}|= 1$ for each $i=3,4$, otherwise $K_{5,5}\subseteq \overline{G}[X\setminus \{x_2\},Y_2]$. Therefore, as 	$ \Delta=8$ and  $|N_G(x_j)\cap Y_i|= 1$ for each $i=1,2$ and $j=3,4$, we have $|\cup_{j=1}^{j=4} N_G(x_j)|\leq 27$.  Therefore, $|\cup_{j=1}^{j=5} N_G(x_j)|\leq 35$ and as $|Y|=40$,  we have $K_{5,5}\subseteq \overline{G}[X\setminus\{x_6\}, Y]$.  
		\end{proof}
So by Claim \ref{c1}, let $ \Delta=9$.   W.l.g let  $|N_G(x_1)|=9$ and $Y_1=N_G(x_1)=\{y_1,\ldots,y_9\}$. Now, we have the following claim.

	\begin{claim}\label{c2}
If either $|N_G(x_i)\cap Y_1|= 0$ or $N_G(x_i)\cap Y_1 = N_G(x_j)\cap Y_1$ for some $i,j\in\{2,\ldots,6\}$, then $K_{5,5}\subseteq \overline{G}$.
	\end{claim}	
	\begin{proof}[Proof of Claim \ref{c2}]
	 Since $K_{2,2} \nsubseteq G$, we have $|N_G(x_i)\cap Y_1|\leq 1$ for each $i$.  Now, let  $|N_G(x_2)\cap Y_1|= 0$.   It is clear that $K_{5,5}\subseteq \overline{G}[X\setminus\{x_1\},Y_1]$. Also, w.l.g let  $N_G(x_2)\cap N_G(x_3)\cap Y_1=\{y\}$, then as $|X|=6$ and  $|Y_1|=9$, we have $K_{5,5}\subseteq \overline{G}[X\setminus \{x_1\},Y_1\setminus \{y\}]$.	
	\end{proof}	
Therefore, by  Claim \ref{c2} and as $|Y_1=N_G(x_1)|=9=\Delta$, one can suppose that  $|N_G(x_i)\cap Y_1|= 1$ and  $N_G(x_i)\cap Y_1 \neq N_G(x_j)\cap Y_1$  for each $i, j\in \{2,3,4,5,6\}$. If  $|N_G(x)|=9$ for each $x\in X$, then by Claim \ref{c2}, it can be said that  $|\cup_{j=1}^{j=6} N_G(x_j)|= 39$. That is, there exists a member $y_{40}$ of $Y$, such that $|N_G(y_{40})|=0$.  Also,  by Claim  \ref{c2}, it is clear  that  $K_{5,4}\subseteq \overline{G}[X\setminus \{x_1\},Y_1]$. Therefore, we have $K_{5,5}\subseteq \overline{G}[X\setminus \{x_1\},Y_1\cup \{y_{40}\}]$ and the proof is complete.

 So, suppose that  $|N_G(x)|\leq 8$ for at least one member of  $X\setminus \{x_1\}$. 	W.l.g let $\deg_G(x_2)=8$ and $ Y_2=N_G(x_2)=\{y_1, y_{10}, y_{11}\ldots,y_{16}\}$. One can suppose that there exist at least three members of $X\setminus \{x_1,x_2\}$, say $\{x_3,x_4,x_5\}=X'$, so that $|N_G(x_i)\cap Y_2\setminus \{y_1\}|= 1$ and $N_G(x_i)\cap Y_2\setminus \{y_1\} \neq N_G(x_j)\cap Y_2\setminus \{y_1\}$ for each $i,j\in \{3,4,5\}$. Otherwise, by an argument similar to the proof of  Claim \ref{c2}, we have $K_{5,5}\subseteq \overline{G}[X\setminus \{x_2\},Y_2]$. If  $|N_G(x_i)\cap Y\setminus (Y_1\cup Y_2)|\leq 6$ for two members of $X'$, then  as $ \Delta=9$ and $|N_G(x_i)\cap Y_2\setminus \{y_1\}|= 1$ for each $x\in X'$, one can check that  $|\cup_{j=1}^{j=5} N_G(x_j)|\leq 35$. Hence, as $|Y|=40$,  we have $K_{5,5}\subseteq \overline{G}[X\setminus\{x_6\}, Y]$. So, w.l.g let $|N_G(x_i)\cap Y\setminus (Y_1\cup Y_2)|=7$ for $i=3,4$, that is  $|N_G(x_i)|= 9$ for $i=3,4$. Therefore, for $i=3,4$,   as $|N_G(x_i)|= \Delta$, by Claim \ref{c2}, we have $|N_G(x_3)\cap N_G(x_4)\cap Y\setminus (Y_1\cup Y_2)|=1$. Also, since $N_G(x_j)\cap Y_i\neq N_G(x_l)\cap Y_i$ for each $l,j\in \{3,4,5\}$ and $i=1,2$, one can check that $|\cup_{j=1}^{j=4} N_G(x_j)|= 29$. Now, consider $x_5$. Since $|N_G(x_i)|= 9$ for $i=3,4$, we have $| N_G(x_5)\cap N_G(x_i)|=1$ using Claim \ref{c2}. Therefore, as  $ \Delta=9$ and $|N_G(x_i)\cap N_{G}(x_5)|= 1$ for each $i=1,3,4$, one can say that $|N_G(x_5)\cap Y\setminus (Y_1\cup N_G(x_3)\cup N_G(x_4))|\leq 6$. So,   $|\cup_{j=1}^{j=5} N_G(x_j)|\leq 35$ and $K_{5,5}\subseteq \overline{G}[X\setminus\{x_6\}, Y]$ and the theorem holds.
\end{proof}
	In the following theorem, we compute the values of $BR_m(K_{2,2}, K_{5,5})$ for $m=7,8$.
	\begin{theorem}\label{th3}
		$BR_7(K_{2,2}, K_{5,5})=BR_8(K_{2,2}, K_{5,5})=30$.
	\end{theorem}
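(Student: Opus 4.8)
The plan is to prove both halves of the claim $BR_7(K_{2,2},K_{5,5})=BR_8(K_{2,2},K_{5,5})=30$ by establishing a lower bound of $30$ for $m=8$ (which automatically gives the same lower bound for $m=7$, since any subgraph-coloring of $K_{7,29}$ extends to $K_{8,29}$ by making the new vertex isolated in $G$, hence complete in $\overline{G}$; one just has to check the added row does not create a $K_{5,5}$ in $\overline{G}$, which forces a short separate verification, or more cleanly one constructs the $m=7$ example directly) and an upper bound of $30$ for $m=7$ (which automatically gives the upper bound for $m=8$, since a subgraph $G$ of $K_{8,30}$ restricts to a subgraph of $K_{7,30}$ on any seven rows, and a $K_{2,2}\subseteq G$ or $K_{5,5}\subseteq\overline{G}$ found there lifts back). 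So it suffices to do two things: (i) exhibit an explicit $G\subseteq K_{8,29}$ with $K_{2,2}\nsubseteq G$ and $K_{5,5}\nsubseteq\overline{G}$, and (ii) show every $G\subseteq K_{7,30}$ with $K_{2,2}\nsubseteq G$ satisfies $K_{5,5}\subseteq\overline{G}$.

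For the lower bound I would mimic the construction used for $m=6$ in Theorem~\ref{th2}: pick neighborhoods $Y_i=N_G(x_i)\subseteq Y$, $|Y|=29$, so that $|Y_i\cap Y_j|=1$ for all $i\neq j$ (this is exactly the $K_{2,2}$-free / linear-hypergraph condition, a resolvable-design-type packing) and so that for every $i$ the union $\bigcup_{j\neq i}Y_j$ misses at least five vertices of $Y$, guaranteeing no $K_{5,5}$ in $\overline{G}$ on the remaining seven rows. With eight rows, degrees summing appropriately, one needs $\sum_j|Y_j|-\binom{m-1}{2}\le |Y|-5$ for each deleted row; balancing this forces each $|Y_i|$ to be about $8$ or $9$, and a careful incidence layout as in (A1)--(A6) gives the explicit example. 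The arithmetic has to be pinned down so that exactly $29$ works and $30$ fails.

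The upper bound is the substantive part and I expect it to be the main obstacle. The approach is the same degree-based case analysis as in Theorem~\ref{th2}: set $\Delta=\Delta(G_X)$ for $G\subseteq K_{7,30}$ with $K_{2,2}\nsubseteq G$, so all pairwise neighborhood intersections in $Y$ have size $\le 1$. If $\Delta\ge 10$, Lemma~\ref{l1} finishes (it is stated for $m\ge 6$, $n\ge 10$, both satisfied). If $\Delta\le 6$, then each row has at most $6$ non-neighbors'' contributing, and since $7\cdot 6 < 30+$ something one shows directly $\bigcup_j N_G(x_j)$ is small enough that $\overline{G}$ contains $K_{5,5}$ — actually with $\Delta\le 6$ and $|Y|=30$ a crude union bound $|\bigcup_j N_G(x_j)|\le 42$ is not immediately enough, so one uses the intersection-$\le 1$ condition: $|\bigcup_j N_G(x_j)|\le \sum|N_G(x_j)| - (\text{overcounted pairs})$, and more importantly picks the five rows and five columns greedily. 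The real work is the cases $\Delta\in\{7,8,9\}$, handled by fixing $Y_1=N_G(x_1)$, arguing (à la Claim~\ref{c2}) that for a $K_{5,5}$ to be avoided at least two rows must meet $Y_1$ in distinct singletons, iterating this to bound $|\bigcup_{j=1}^{6}N_G(x_j)|$, and then concluding $K_{5,5}\subseteq\overline{G}[X\setminus\{x_7\},Y]$ because $|Y|=30$ leaves $\ge 5$ unused columns. The bookkeeping is more delicate than in the $m=6$ case because there are seven rows rather than six, so the inductive peeling has one extra layer; the key quantitative target is always to force $|\bigcup_{j\in S}N_G(x_j)|\le 25$ for some six-element $S$, after which five common non-neighbor columns and the sixth deleted row give the monochromatic $K_{5,5}$ in the complement.
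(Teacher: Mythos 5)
Your overall architecture is the same as the paper's: an explicit $K_{2,2}$-free construction on $K_{8,29}$ whose complement has no $K_{5,5}$ for the lower bound (which transfers to $m=7$ by monotonicity), and a maximum-degree case analysis on $K_{7,30}$ for the upper bound (which transfers to $m=8$ by restriction). However, the lower-bound condition you impose is stated backwards, and as written it would defeat the construction. You require that for every $i$ the union $\bigcup_{j\neq i}Y_j$ \emph{misses at least five} vertices of $Y$; but then those five missed columns together with any five of the seven rows $x_j$, $j\neq i$, form a $K_{5,5}$ in $\overline{G}$, which is exactly what must be avoided. The correct requirement (and what the paper's (B1)--(B8) construction achieves via (C1)--(C3)) is that \emph{every} $5$-subset of rows covers all but at most $4$ of the $29$ columns: any five rows among $x_2,\ldots,x_8$ have union of size $25$, and $x_1$ together with any four others has union of size $26$. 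Relatedly, your degree guess ``about $8$ or $9$'' is off; with pairwise intersections exactly one, five rows of size $7$ give union $5\cdot 7-\binom{5}{2}=25=29-4$, so the feasible layout is one row of degree $8$ and seven rows of degree $7$, and an explicit incidence table still has to be produced and checked --- you have not done this.

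For the upper bound, which you correctly identify as the substantive part, your proposal stops at a plan. The entire difficulty of the paper's proof lies in the cases $\Delta\in\{6,7,8\}$ (after $\Delta\geq 10$ via Lemma \ref{l1}, $\Delta\leq 5$ trivially, and a separate argument for $\Delta=9$): the paper needs a sequence of structural claims --- e.g.\ for $\Delta=7$ an analysis of the set $B$ of degree-$7$ rows split into $|B|=1,2,3,4,\geq 5$, with auxiliary facts about disjoint neighborhoods and triple intersections, and for $\Delta=8$ an analysis of the rows with six neighbors outside $Y_1$ --- before one can force five rows whose union has size at most $25$. Asserting that ``iterating the Claim-\ref{c2}-style peeling'' with ``more delicate bookkeeping'' will reach the target $|\bigcup_{j\in S}N_G(x_j)|\leq 25$ is precisely the statement that needs proof, and it is not routine: with seven rows of degree up to $8$ the crude union and pairwise-intersection bounds do not suffice, which is why the paper's argument runs through many interlocking subcases. (Also, aiming for a six-element $S$ with union at most $25$ is stronger than necessary; a five-element $S$ suffices and is what the casework actually delivers.) So while the strategy is the right one, both halves currently have genuine gaps: the lower-bound criterion must be corrected and instantiated, and the upper-bound casework must actually be carried out.
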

	\begin{proof}
		It is sufficient to show that:
		\begin{itemize}
			\item{\bf (I):}  $K_{8,29} \rightarrow (K_{2,2},K_{5,5})$.
			\item{\bf (II):} $BR_7(K_{2,2}, K_{5,5})\leq 30$.
		\end{itemize}
		We begin with $(I)$. Let $(X=\{x_1,\ldots, x_8\},Y=\{y_1,\ldots ,y_{29}\})$ are the partition  sets of  $K=K_{8,29}$. Suppose that $G$ is a subgraph of $ K$, such that for each $i\in[8]$,  $N_G(x_i)=Y_i$ is as follows.
		\begin{itemize}
			
			\item{\bf (B1):} $Y_1=\{y_1,\ldots, y_8\}$.	
			\item{\bf (B2):} $Y_2=\{y_1,y_9, y_{10},y_{11},y_{12}, y_{13}, y_{14}\}$.
			\item{\bf (B3):} $Y_3=\{y_2,y_9, y_{15},y_{16}, y_{17}, y_{18}, y_{19}\}$.
			\item{\bf (B4):} $Y_4=\{y_3,y_{10}, y_{15}, y_{20},y_{21}, y_{22}, y_{23}\}$.
			\item{\bf (B5):} $Y_5=\{y_4,y_{11}, y_{16}, y_{20}, y_{24}, y_{25},y_{26}\}$.
			\item{\bf (B6):} $Y_6=\{y_5,y_{12}, y_{17}, y_{21}, y_{24}, y_{27},y_{28}\}$.
			\item{\bf (B7):} $Y_7=\{y_6,y_{13}, y_{18}, y_{22}, y_{25}, y_{27},y_{29}\}$.
			\item{\bf (B8):} $Y_8=\{y_7,y_{14}, y_{19}, y_{23}, y_{26}, y_{28},y_{29}\}$.
		\end{itemize}
		Now, for each $i,j\in[8]$, by $(Di)$ and $(Dj)$, it can be said that:
		\begin{itemize}
			
			\item{\bf (C1):}  $|N_G(x_i)\cap N_G(x_j)|=1$, for each $i,j\in [8]$.
			\item{\bf (C2):} $|\cup_{i=1}^{i=5} N_G(x_{j_i})|= 25$, for each $j_1,j_2,j_3,j_4,j_5\in \{2,\ldots,8\}$.
			\item{\bf (C2):} $|\cup_{i=1}^{i=4} N_G(x_{j_i})\cup N_G(x_1)|= 26$, for each $j_1,j_2,j_3,j_4\in \{2,\ldots,8\}$.
		\end{itemize}
		
		By $(C1)$,  we have $K_{2,2}\nsubseteq G$. Also, by $(C2)$ and $(C3)$, one can check that  $K_{5,5}\nsubseteq \overline{G}$, which means that,  $K_{8,29} \rightarrow (K_{2,2},K_{5,5})$. Therefore, $(I)$ is correct.

		Now, we prove $(II)$. Assume that  $(X=\{x_1,\ldots,x_7\},Y=\{y_1,\ldots ,y_{30}\})$ are the partition  sets of  $K=K_{7,30}$. Let $G$ be a subgraph of $K$, where $K_{2,2} \nsubseteq G$. We show that $K_{5,5}\subseteq \overline{G}$.  Consider $\Delta=\Delta (G_X)$. Since $K_{2,2} \nsubseteq G$,  by Lemma \ref{l1}, one can assume that $\Delta\leq 9$.
		\begin{claim}\label{c3}
		If	$ \Delta= 9$, then $K_{5,5}\subseteq \overline{G}$.
		\end{claim}	
		\begin{proof}[Proof of Claim \ref{c3}]
		 W.l.g let $|N_G(x_1)=Y_1|=9$. Since $K_{2,2} \nsubseteq G$, $|X|=7$, and $|Y_1|=9$,  one can assume that $|N_G(x_i)\cap Y_1|= 1$, and $x_i$ and $x_j$ have  different neighborhood in $Y_1$ for each $i,j\in\{2,3,\ldots,7\}$. Otherwise, in any case $K_{5,5}\subseteq \overline{G}[X,Y_1]$. Therefore,   for each $x\neq x_1$, we have  $K_{5,4}\subseteq \overline{G}[X\setminus \{x_1,x\},Y_1]$.  So, if there is a member $y$ of $Y\setminus Y_1$, so that $|N_{\overline{G}}(y)\cap (X\setminus\{x_1\})|\geq 5$, then  $K_{5,5}\subseteq \overline{G}[X\setminus \{x_1\}, Y_1\cup \{y\}]$. Hence, let $|N_G(y)\cap (X\setminus\{x_1\}) |\geq 2$ for each $y\in Y\setminus Y_1$. So,  $|E(G[X, Y\setminus Y_1])|\geq 42$. Hence, by pigeon-hole principle, there is at least one member $x_2$ of $X\setminus\{x_1\}$, such that $|N_G(x_2)\cap (Y\setminus Y_1)|\geq 7$. Set $N_G(x_2)\cap (Y\setminus Y_1)=Y_2$. So, since $K_{2,2} \nsubseteq G$  for each  $i\in \{3,4,5,6,7\}$, we have $|N_G(x_i)\cap Y_2|\leq 1$. Therefore, as $|Y_2|\geq 7$, it is easy to say that $K_{5,1}\subseteq \overline{G}[X\setminus\{x_1,x_2\}, Y_2]$. Hence,  as $K_{5,4}\subseteq \overline{G}[X\setminus \{x_1,x_2\},Y_1]$, we have $K_{5,5}\subseteq \overline{G}[X\setminus\{x_1,x_2\}, Y_1\cup Y_2]$. So the claim holds.
		\end{proof}
		Now, by Claim \ref{c3}, let	$ \Delta\leq 8$. If $ \Delta\leq 5$, then it is clear that $K_{5,5}\subseteq \overline{G}$. 	So, one can suppose that $\Delta\in \{6,7,8\}$. Consider three cases as follows.
		
\bigskip 
\noindent \textbf{Case 1.} $ \Delta=6$.	W.l.g., suppose that $\Delta=|N_G(x_1)=Y_1|$. As $K_{2,2} \nsubseteq G$, one can assume that $|N_G(x_i)\cap Y_1|= 1$ and $N_G(x_i)\cap Y_1\neq N_G(x_j)\cap Y_1$ for at least three members of $X\setminus\{x_1\}$. Otherwise, $K_{5,5}\subseteq \overline{G}[X\setminus \{x_1\}, N_G(x_1)]$.  Set $X'=\{ x_2,x_3,x_4\}$, such that for each $x_i, x_j\in X'$ we have   $|N_G(x_i)\cap Y_1|= 1$  and $N_G(x_i)\cap Y_1\neq N_G(x_j)\cap Y_1$. W.l.g., assume that $Y_1=N_G(x_1)=\{y_1,\ldots, y_6\}$ and $x_iy_{i-1}\in E(G)$ for each $i=2,3,4$. If either  $|N_G(x)|\leq  4$ for one member of $X'$, or there exist two members $x_i, x_j$ of $X'$, such that $|N_G(x_i)=N_G(x_j)|=5$, then one can check that $|\cup_{i=1}^{i=5}N_G(x_i)|\leq 25$. Therefore, as $|Y|=30$, we have $K_{5,5}\subseteq \overline{G}$. So, suppose that  $|N_G(x_i)|=  6$ for at least two members of  $X'$. Now, w.l.g. assume that $|N_G(x_2)|=|N_G(x_3)|=6$ and $|N_G(x_4)|\geq 5$. W.l.g., assume that $Y_2=N_G(x_2)=\{y_1,y_7\ldots, y_{11}\}$.

 Let  $|N_G(x)\cap Y_2|=  1$ for one member of $\{x_3,x_4\}$, and w.lg let $x=x_3$, $Y_3=N_G(x_3)=\{y_2,y_7, y_{12}\ldots, y_{15}\}$. Hence, one can say that  $|\cup_{i=1}^{i=4}N_G(x_i)|\leq 20$. By considering  $x_5, x_6$, one can assume that either $|N_G(x)\cap ( \cup_{i=1}^{i=4}N_G(x_i))|\geq   1$ for at least one member of $\{x_5,x_6\}$ or $|N_G(x)|\leq 5$ for at least one member of $\{x_5,x_6\}$. Otherwise, we have $K_{2,2}\subseteq G[\{x_5, x_6\}, Y\setminus \cup_{i=1}^{i=4}N_G(x_i)]$ which is a concentration. Therefore, in any case, we have $|\cup_{i=1}^{i=4}N_G(x_i)\cup N_G(x)|\leq 25$ for some members of $\{x_5,x_6\}$ and the proof is complete. 
		
		So, suppose that $|N_G(x)\cap Y_2|=  0$ for each $x\in \{x_3,x_4\}$. W.l.g., assume that $Y_3=N_G(x_3)=\{y_2, y_{12}\ldots, y_{16}\}$. If either $|N_G(x_4)\cap Y_3\setminus \{y_2\}|=  1$  or $|N_G(x_3)|= 5$, then the proof is the same. So, suppose that $|N_G(x_4)\cap Y_3|=  0$  and $|N_G(x_3)|= 6$. W.l.g., assume that $Y_4=N_G(x_4)=\{y_3, y_{17}\ldots, y_{21}\}$. Hence, we have $|\cup_{i=1}^{i=4}N_G(x_i)|=21$. By considering  $x_5, x_6$, if either $|N_G(x)\cap ( \cup_{i=1}^{i=4}N_G(x_i))|\geq   2$ for at least one member of $\{x_5,x_6\}$,  or $|N_G(x)|\leq 4$ for at least one member of $\{x_5,x_6\}$, then we have $|\cup_{i=1}^{i=4}N_G(x_i)\cup N_G(x)|\leq 25$ for some members of $\{x_5,x_6\}$ and the proof is complete. So, for each member of $\{x_5,x_6\}$, suppose that $|N_G(x)|\geq 5$ and $|N_G(x)\cap ( \cup_{i=1}^{i=4}N_G(x_i))|\leq   1$. Therefore, as $|N_G(x)\cap ( \cup_{i=1}^{i=4}N_G(x_i))|\leq   1$, by pigeon-hole principle, one can assume that there exists at least one $i\in \{2,3,4\}$ say $i=2$, such that $|N_G(x)\cap  N_G(x_2)|=0$ for each $x\in \{x_5,x_6\}$. Hence, as   $|N_G(x_2)|=6$ and $|N_G(x_2)\cap  N_G(x_i)|=0$ for $i=3,4,5,6$, one can say that   $K_{5,5}\subseteq \overline{G}[X\setminus \{x_2\}, Y_2]$. Therefore, the proof of Case 1 is complete.
	
\bigskip 
	\noindent \textbf{Case 2.} $ \Delta=7$. W.l.g., assume that $ N_G(x_1)=Y_1=\{y_1,y_2,\ldots,y_7\}$. As $K_{2,2} \nsubseteq G$, we have $|N_G(x_i)\cap Y_1|\leq 1$. Hence,  one can say that $K_{5,2}\subseteq \overline{G}[X\setminus \{x_1,x\},Y_1]$ for each $x\in X\setminus\{x_1\}$. Now, set $B$ as follows:
	\[B=\{x\in X, ~~|N_G(x)|=7\}\]
	By considering $B$, we have the following claim.
	
\begin{claim}\label{c6}
If $|B|= 1$, then  $K_{5,5}\subseteq \overline{G}$.
\end{claim}	
\begin{proof}[Proof of Claim \ref{c6}]
 Set $X_1=X\setminus \{x_1\}$. As $|Y_1|=7$ and $|N_G(x_i)\cap Y_1|\leq 1$,  one can say that $|N_G(x)\cap Y_1|= 1$ for at least four vertices of $X_1$. Otherwise,
 $K_{5,5}\subseteq \overline{G}$. Therefore, set $X'=\{x\in X_1, ~~|N_G(x)\cap Y_1|= 1\}$ and w.l.g. let $\{x_2,x_3,x_4,x_5\}\subseteq X'$. If $|\cup_{j=2}^{j=5} N_G(x_j)\cap (Y\setminus Y_1)|\leq 18 $,  then we have $|\cup_{j=1}^{j=5} N_G(x_j)|\leq 25 $ and so	$K_{5,5}\subseteq \overline{G}$. Now, suppose that $|\cup_{j=2}^{j=5} N_G(x_j)\cap (Y\setminus Y_1)|\geq 19 $, which means that there exist at least three vertices $\{x_2,x_3,x_4\}=X''$ of $X'$, such that $|N_G(x_j)\cap (Y\setminus Y_1)|=5$ and $N_G(x_i)\cap  N_G(x_j)\cap (Y\setminus Y_1) =\emptyset$ for each $i,j\in \{2,3,4\}$. W.l.g., assume that $Y_2=\{y_8,y_9\ldots,y_{12}\}$, $ Y_3=\{y_{13},y_{14}\ldots,y_{17}\}$, and $Y_4=\{y_{18},y_{19},\ldots,y_{22}\}$, in which $Y_i=N_G(x_i)\cap (Y\setminus Y_1)$ for $i=2,3,4$. Also, since $|\cup_{j=2}^{j=5} N_G(x_j)\cap (Y\setminus Y_1)|\geq 19 $ and $x_5\in X'$, we can assume that $Y_5=\{y_{23},y_{24},\ldots,y_{26}\}\subseteq N_G(x_5)\cap (Y\setminus Y_1)$. Therefore, as  $|N_G(x_5)\cap (Y\setminus Y_1)|\leq 5$, one can say that $|N_G(x_5)\cap (Y\setminus Y_1) \cap Y_i|=0$ for at least two $i$, which $i\in \{2,3,4\}$. W.l.g., assume that for $i=2,3$, we have $|N_G(x_5)\cap Y_i|=0$. Therefore, if there exists one $j\in \{6,7\}$  and one $i\in \{2,3\}$, such that $|N_G(x_j)\cap Y_i|=0$, then  $K_{5,5}\subseteq \overline{G}[X''\setminus \{x_i\}\cup \{x_1,x_5, x_j \}, Y_i]$.   Therefore, suppose that $|N_G(x_j)\cap Y_i|=1$ for each $i\in \{2,3\}$ and each $j\in \{6,7\}$. Hence, as $|N_G(x_j)\cap (Y\setminus Y_1)|\leq 6$ for $j=6,7$, we have $|\cup_{x\in X'''}N_G(x_j)|\leq 24$, where  $X'''=\{x_1,x_2,x_3, x_6,x_7\}$. Which means that  $K_{5,5}\subseteq \overline{G}[X''', Y\setminus  \cup_{x\in X'''}N_G(x_j)]$ and the claim holds.
\end{proof}	
  
Now, by Claim \ref{c6}, we may suppose that $|B|\geq 2$. Hence, we have the following claim.
\begin{claim}\label{c7}
If there exist $x, x'\in B$, so that $|N_G(x)\cap N_G(x')|=0$, then $K_{5,5}\subseteq \overline{G}$. 
\end{claim}	
\begin{proof}[Proof of Claim \ref{c7}]
W.l.g., assume that $x_1, x_2\in B$, $|N_G(x_1)\cap N_G(x_2)|=0$, $ N_G(x_1)=Y_1=\{y_1,\ldots,y_7\}$, and $ N_G(x_2)=Y_2=\{y_8,\ldots,y_{14}\}$. Since $K_{2,2} \nsubseteq G$, so for $i=1,2$, we have $|N_G(x_i)\cap Y_i|\leq 1$ and since $|Y_i|=7$, we have $K_{5,4}\subseteq \overline{G}[X\setminus \{x_1,x_2\},Y_1\cup Y_2]$. Also, we may suppose that the following two facts are established. Otherwise, in any case it is easy to check that $K_{5,5}\subseteq \overline{G}$.
\begin{itemize}
	\item {\bf (F1):} $|N_G(x_j)\cap Y_i|=1$ and $N_G(x_j)\cap Y_i\neq N_G(x_l)\cap Y_i$, for each $j, l\in\{3,4,\ldots,7\}$ and  $i\in \{1,2\}$. 
	\item {\bf (F2):} For each $j\in\{3,4,\ldots,7\}$ suppose that $Y_j=N_G(x_j)\cap (Y\setminus Y_1\cup Y_2)$, hence for each $j_1,j_2,j_3\in \{3,4,5,6,7\}$, we have 
$|\cup_{l=1}^{l=3} N_G(x_{j_l})|\geq 12 $.	
\end{itemize}
By $(F1)$ and $(F2)$, we can prove the following fact.
\begin{itemize}
	 	
	\item {\bf (F3):}  If there exists $j\in \{3,\ldots,7\}$, such that $|Y_j=N_G(x_j)\cap (Y\setminus Y_1\cup Y_2)|=5$, then  for each $x \in X\setminus \{x_1,x_2,x_j\}$, we have $| N_G(x)\cap Y_j|=1$.	
\end{itemize}
W.l.g., assume that $|Y_3=N_G(x_3)\cap (Y\setminus Y_1\cup Y_2)|=5$. Therefore, if there exists  $x\in X\setminus \{x_1,x_2,x_3\}$, such that $| N_G(x)\cap Y_3|=0$, then one can say that  $K_{5,2}\subseteq \overline{G}[ \{x_1,x_4,x_5,x_6,x_7\},\{y,y'\}]$ for some $y,y'\in Y_3$. Hence, as $|N_G(x_1)\cap N_G(x_2)|=0$, $|N_G(x_2)|=7$, and $|N_G(x)\cap N_G(x_2)|=1$ for each $x\in X\setminus \{x_1,x_2\}$, it is easy to say that $K_{5,3}\subseteq \overline{G}[ \{x_1,x_4,x_5,x_6,x_7\},Y_2]$, which means that $K_{5,5}\subseteq \overline{G}[ \{x_1,x_4,x_5,x_6,x_7\}, Y_2\cup Y_3]$. Hence $(F3)$ is true.

Consider $|B|$ and suppose that $|B|\leq 3$.  By $(F1)$ and $(F2)$, we have  $|Y_j=N_G(x_j)\cap (Y\setminus Y_1\cup Y_2)|=4$ for at least four $j\in \{3,4,\ldots,7\}$. W.l.g.,  suppose that $M=\{3,4,5,6\}$ and $|Y_j=N_G(x_j)\cap (Y\setminus Y_1\cup Y_2)|=4$ for each $j\in M$. Therefore, for each $j_1,j_2,j_3\in M$, we have $|\cup_{l=1}^{l=3} N_G(x_{j_l})|= 12 $, which means that $|Y_{j_1}\cap Y_{j_2}|=0$ for each $j_1, j_2\in M$. Therefore, as  $|Y\setminus( Y_1\cup Y_2)|=16$ and $|X\setminus \{x_1,x_2\}|=5$, by considering $x_7$, if $|N_G(x_7)|=7$, then $K_{2,2}\subseteq G$    which is a contradiction.  Also, if $|N_G(x_7)|\leq 6$, then $|\cup_{l=1}^{l=2} N_G(x_{j_l})\cup N_G(x_7)|= 11$ for some $j_1,j_2\in M$, which is a contradiction with $(F2)$.

So, suppose that $|B|\geq 4$ and w.l.g. assume that $x_3, x_4\in B$. Therefore, by $(F1)$ and $(F3)$, we  can suppose  that $Y_3=\{y_{15}, y_{16}, y_{17}, y_{18}, y_{19}\}$ and  $Y_4=\{y_{15}, y_{20}, y_{21}, y_{22}, y_{23}\}$, where $Y_i=N_G(x_i)\cap (Y\setminus Y_1\cup Y_2)$ for $i=3,4$. Now, by $(F2)$ and $(F3)$, we have $B=X$. Otherwise, w.l.g. let $x_5\notin B$, using $(F3)$ we have $|N_G(x_5)|\leq 6$. Hence, by $(F1)$, $(F3)$,  we have $|N_G(x_5)\cap (Y\setminus \cup_{i=1}^{i=4} Y_i)	|\leq 2$. So, we can say that $| \cup_{i=3}^{i=5} Y_i|\leq 11$, a contradiction with $(F2)$. Therefore, by $(F1), (F3)$, w.l.g. assume that  $Y_5=\{y, y', y_{24}, y_{25}, y_{26}\}$, where $y\in Y_3$ and $y'\in Y_4$. Also by $(F1), (F3)$, w.l.g. let   $Y_6=\{y'', y''', y_{24}, y_{27}, y_{28}\}$, where $y''\in Y_3$ and $y'''\in Y_4$. Therefore, it is easy to say that $| \cup_{i=2}^{i=6} Y_i|\leq 25$, which means that $K_{5,5}\subseteq \overline{G}[ \{x_2, x_3,x_4,x_5,x_6\},Y\setminus \cup_{i=2}^{i=6} Y_i]$ and the claim holds.
\end{proof}	
Now, by considering Claim \ref{c7}, we verify the following claim.
 \begin{claim}\label{c8}
 	If $|B|= 2$, then we have $K_{5,5}\subseteq \overline{G}$.
 \end{claim}	
 \begin{proof}[Proof of Claim \ref{c8}]
 	Suppose that $x_1,x_2\in B$ and $Y_i=N_G(x_i)$ for $i=1,2$. By Claim \ref{c7}, w.l.g. let $Y_2=\{y_1, y_8, y_9, \ldots, y_{13}\}$. Set $X_1=X\setminus \{x_1,x_2\}$. Suppose that $Y_j=N_G(x_j)\cap (Y\setminus Y_1\cup Y_2)$, hence for each $j_1,j_2,j_3\in \{3,4,5,6,7\}$, we may assume that $|\cup_{l=1}^{l=3} N_G(x_{j_l})|\geq 13 $; otherwise $K_{5,5}\subseteq \overline{G}$. Therefore, as $|X_1|=5$, it is easy to check that there exist at least three vertices $\{x_3,x_4, x_5\}=X'_1$ of $X_1$, such that $|Y_j=N_G(x_j)\cap (Y\setminus Y_1\cup Y_2)|=5$. Hence, we have 	$|N_G(x_j)\cap (Y_1\cup Y_2)|\leq 1$  for each $x_j\in X_1'$. Now, consider $x_6, x_7$, as   $K_{2,2} \nsubseteq G$, so for $i=1,2$ and $j=6,7$, we have $|N_G(x_j)\cap Y_i|\leq 1$. Therefore as $|Y_1\cup Y_2|=13$, we have $|\cup_{j=3}^{j=7} (N_G(x_j)\cap (Y_1\cup Y_2))|\leq 7$, which means that  $K_{5,5}\subseteq \overline{G}[X_1, Y_1\cup Y_2]$ and the claim holds.	 
 \end{proof}	
 Therefore, by  Claims \ref{c6}, \ref{c8}, let $|B|\geq 3$ and w.l.g.  suppose that $\{x_1,x_2,x_3\}\subseteq B$. By considering the members of $B$, we prove the following claim. 
 \begin{claim}\label{c0}
 	If  $|N_G(x)\cap N_G(x')\cap N_G(x'')|=1$ for some  $x,x',x'' \in B$, then $K_{5,5}\subseteq \overline{G}$. 
 \end{claim}	
 \begin{proof}[Proof of Claim \ref{c0}]
 	W.l.g., assume that $\{y_1\}=N_G(x_1)\cap N_G(x_2)\cap N_G(x_3) $, $Y_1=\{y_1,\ldots,y_7\}$, $Y_2=\{y_1,y_8,\ldots,y_{13}\}$, and  $Y_3=\{y_1, y_{14},\ldots,y_{19}\}$, where $Y_i=N_G(x_i)$ for $i=1,2,3$. Since $K_{2,2} \nsubseteq G$,  for $i=1,2,3$ and each $x\in X\setminus\{x_1,x_2,x_3\}$, we have $|N_G(x)\cap Y_i|\leq 1$. Now, we have the following  fact.
 	\begin{itemize}
 		\item {\bf (P1):} For each $j, l\in\{4,\ldots,7\}$ and for each $i\in \{1,2,3\}$,  $|N_G(x_j)\cap Y_i\setminus \{y_1\}|=1$ and $(N_G(x_j)\cap Y_i\setminus \{y_1\})\neq ( N_G(x_l)\cap Y_i\setminus\{y_1\})$.		
 	\end{itemize}
 Suppose that $|N_G(x)\cap  Y_i\setminus\{y_1\}|=0$ for at least one $x$ and at least one $i\in \{1,2,3\}$. W.l.g., let $|N_G(x_4)\cap  Y_1\setminus \{y_1\}|=0$. Therefore, as $|N_G(x)\cap Y_i|\leq 1$, one can say that $K_{2,11}\subseteq \overline{G}[\{x_3,x_4\}, Y_1\cup Y_2\setminus \{y_1\}]$.  Since  $|N_G(x)\cap Y_i|\leq 1$ for $i=5,6,7$, it is easy to say that $|\cup _{j=5}^{j=7}(N_G(x_j)\cap (Y_1\cup Y_2\setminus \{y_1\}))|\leq 6$. Therefore, one can say that  $K_{3,6}\subseteq \overline{G}[\{x_5,x_6,x_7\}, Y_1\cup Y_2\setminus \{y_1\}]$. Hence, we have $K_{5,5}\subseteq \overline{G}[\{x_3,x_4, x_5,x_6,x_7\}, Y_1\cup Y_2\setminus \{y_1\}]$. By an argument similar to the proof of  $|N_G(x_j)\cap Y_i\setminus \{y_1\}|=1$,  one can  check that $(N_G(x_j)\cap Y_i\setminus \{y_1\})\cap ( N_G(x_l)\cap Y_i\setminus\{y_1\})=\emptyset$ for each $j,l\in \{4,\ldots,7\}$. So, the fact is true.
 
Also, as $| \cup_{i=1}^{i=3} Y_i|= 19$, we can say that the following  fact is true; otherwise  it is easy to check that $K_{5,5}\subseteq \overline{G}$.
 \begin{itemize}
 \item {\bf (P2):} For each $j\in\{4,\ldots,7\}$, let $Y_j=N_G(x_j)\cap (Y\setminus Y_1\cup Y_2\cup Y_3)$. Then for each $j_1,j_2\in \{3,4,5,6,7\}$, we have $|\cup_{l=1}^{l=2} Y_{j_l}|\geq 7$.	
 \end{itemize}

Therefore, by $(P2)$, one can assume  that $|Y_j|=4$ for at least three $j\in \{4,5,6,7\}$. W.l.g., let $|Y_j|=4$ for each $j\in \{4,5,6\}$. Hence by $(P1)$, we have $x_j\in B$ for each $j\in \{4,5,6\}$. W.l.g., assume that $Y_4=\{y_{20}, y_{21}, y_{22}, y_{23}\}$. Hence, by $(P2)$ and  Claim \ref{c7}, one can assume that $|Y_4\cap Y_5|=1$. So, let $Y_5=\{y_{20}, y_{24}, y_{25}, y_{26}\}$. Now consider $Y_6$. Since  $|Y_i\cap Y_6|=1$, if $y_{20}\notin Y_6$,  it is easy to say that $|Y_4\cup Y_5\cup Y_6|=9$, moreover in this case by $(P1)$, we have $|Y_2\cup Y_3\cup Y_4\cup Y_5\cup Y_6|=25$, which means that $K_{5,5}\subseteq \overline{G}[\{x_2,\ldots,x_6\}, Y]$. Therefore, let  $y_{20}\in Y_6$ and  $Y_6=\{y_{20}, y_{27}, y_{28}, y_{29}\}$. Now, consider $x_7$, if $y_{30}\notin N_G(x_7)$, then  $|Y_{j}\cup  Y_7|\leq 6$ for each $j\in \{4,5,6\}$ which is a contradiction with $(P2)$. So, suppose that $y_{30}\in N_G(x_7)$. Since $\deg_G(x_7)\leq 7$,  if $y_{20}\in N_G(x_7)$, then    $|Y_{j}\cup  Y_7|\leq 5$ for each $j\in \{4,5,6\}$, a contradiction to $(P2)$. Therefore, assume that   $y_{20}\notin N_G(x_7)$. Now, we can assume that $|N_G(x_7)\cap Y_i\setminus\{y_{20}\}|\neq 0$ for each $i=4,5,6$. Otherwise, w.l.g. let $|N_G(x_7)\cap Y_4\setminus\{y_{20}\}|= 0$, then  $|Y_5\cup Y_6\cup Y_7|=8$, moreover in this case, by $(P1)$, we have $|Y_2\cup Y_3\cup Y_5\cup Y_6\cup Y_7|\leq 25$, which means that $K_{5,5}\subseteq \overline{G}$. So, suppose that  $|N_G(x_7)\cap Y_i\setminus\{y_{20}\}|\neq 0$. W.l.g., let $Y_7=\{y_{21}, y_{24}, y_{27}, y_{30}\}$. Hence,  $|Y_4\cup Y_5\cup Y_7|=9$, and in this case, by $(P1)$, we have $|Y_2\cup Y_3\cup Y_4\cup Y_5\cup Y_7|=25$, which means that $K_{5,5}\subseteq \overline{G}[X\setminus \{x_1,x_6\}, Y]$. Therefore, the claim holds. 
 \end{proof}	
	Hence, consider $|B|$, By Claim \ref{c8}, we have $|B|\geq 3$. Now, we verify the following two claims.
\begin{claim}\label{c9}
	If $|B|= 3$, then we have $K_{5,5}\subseteq \overline{G}$.
\end{claim}	
\begin{proof}[Proof of Claim \ref{c9}]
	Suppose that $\{x_1,x_2, x_3\}= B$. W.l.g., assume that   $N_G(x_1)=Y_1=\{y_1,\ldots,y_7\}$, $ N_G(x_2)=Y_2=\{y_1,y_8,\ldots,y_{13}\}$, and  $ N_G(x_3)=Y_3=\{y_2, y_8, y_{14},\ldots,y_{18}\}$. Hence, it can be checked that $| \cup_{i=1}^{i=3} N_G(x_i)|=18$. If  $|(N_G(x)\cup N_G(x'))\cap (Y\setminus \cup_{i=1}^{i=3} N_G(x_i))|\leq 7$ for some $x,x' \in X\setminus B$, then it is clear that $K_{5,5}\subseteq \overline{G}$. So, let $|(N_G(x)\cup N_G(x'))\cap (\cup_{i=1}^{i=4} N_G(x_i))|\geq 8$ for each $x,x'\in X\setminus B$. Set $X'=X\setminus B$,  $Y'_1=\{y_3,\ldots,y_7\}$, $ Y'_2=\{y_9,\ldots,y_{13}\}$, and  $Y'_3=\{ y_{14},\ldots,y_{18}\}$. For each $i\in \{1,2,3\}$,  there are at least two members $x, x'$ of $X'$, so that $|N_G(x)\cap Y'_i|=|N_G(x')\cap Y_i'|=1$; otherwise  one can check  that  $K_{5,5}\subseteq \overline{G}[X\setminus \{x_i\}, Y'_i]$. Hence, if there is a member $x$ of $X'$, so that $|N_G(x)\cap(Y'_1\cup Y'_2\cup Y'_3 )|=3$, then  by pigeon-hole principle, there  is a member $x'$ of  $X'\setminus\{x\}$, such that $|N_G(x')\cap(Y'_1\cup Y'_2\cup Y'_3 )|=2$. Therefore, we have $|(N_G(x)\cap Y\setminus \cup_{i=1}^{i=3} N_G(x_i))\cup N_G(x')\cap (Y\setminus \cup_{i=1}^{i=3} N_G(x_i))|\leq 7$ and so $K_{5,5}\subseteq \overline{G}[B\cup \{x,x'\}, Y]$. Hence, let $|N_G(x)\cap(Y'_1\cup Y'_2\cup Y'_3 )|\leq 2$ for each $x\in X'$. Therefore, since $|B|=3$ and $|X'|=4$,  by pigeon-hole principle, it can be shown that there exist  two members $x,x'$ of $X'$, such that  $|N_G(x)\cap(Y'_1\cup Y'_2\cup Y'_3 )|=|N_G(x')\cap(Y'_1\cup Y'_2\cup Y'_3 )|=2$. W.l.g., assume that $x=x_4, x'=x_5$. Therefore, as  $|N_G(x)|\leq 6$ for each $x\in X''$, we have   $|N_G(x)\cap (Y\setminus \cup_{i=1}^{i=3} N_G(x_i))|\leq 4$ for each $x\in \{x_4,x_5\}$. If  either $|N_G(x)\cap N_G(x)\cap (Y\setminus \cup_{i=1}^{i=3} N_G(x_i))|\leq 3$ for one $x\in \{x_4,x_5\}$, or $|N_G(x_4)\cap N_G(x_5)\cap (Y\setminus \cup_{i=1}^{i=3} N_G(x_i))|=1$,  then we have $K_{5,5}\subseteq \overline{G}[B\cup \{x_4,x_5\}, Y]$. So, w.l.g. let $N_G(x_4)\cap (Y\setminus \cup_{i=1}^{i=3} N_G(x_i))=\{y_{19}, y_{20},y_{21}, y_{22}\}$ and $N_G(x_5)\cap (Y\setminus \cup_{i=1}^{i=3} N_G(x_i))=\{y_{23}, y_{24},y_{25}, y_{26}\}$. Now consider $x_6, x_7$. If  $|(N_G(x)\cap Y\setminus \cup_{i=1}^{i=3} N_G(x_i))|\leq 3$ for one $x\in \{x_6,x_7\}$, then the proof is the same. So, suppose that $|(N_G(x)\cap Y\setminus \cup_{i=1}^{i=3} N_G(x_i))|\geq 4$ for each $x\in \{x_6,x_7\}$. If  there exists a vertex  $x\in \{x_6,x_7\}$, such that $|(N_G(x)\cap Y\setminus \cup_{i=1}^{i=3} N_G(x_i))|= 4$ and $|(N_G(x)\cap N_G(x_j)\cap Y\setminus \cup_{i=1}^{i=3} N_G(x_i))|= 1$ for one $j\in \{4,5\}$, then the proof is the same. Hence, in any case, we have $|N_G(x_i)\cup \{y_{27}, y_{28}, y_{29}, y_{30}\}|\geq 3$, which means that $K_{2,2}\subseteq G$, which is  a contradiction and the claim holds. 
	 
\end{proof}	
\begin{claim}\label{c10}
	If $|B|= 4$, then we have $K_{5,5}\subseteq \overline{G}$.
\end{claim}	
\begin{proof}[Proof of Claim \ref{c10}]
	Suppose that $x_1,x_2, x_3,x_4\in B$ and $Y_i=N_G(x_i)$ for $i=1,2,3,4$. By Claim \ref{c1},  it can be checked that $| \cup_{i=1}^{i=4} N_G(x_i)|=22$. If $|N_G(x)\cap (Y\setminus \cup_{i=1}^{i=4} N_G(x_i))|\leq 3$ for one $x\in X\setminus B$, then  $K_{5,5}\subseteq \overline{G}$. So, let $|N_G(x)\cap (Y\setminus \cup_{i=1}^{i=4} N_G(x_i))|\geq 4$ for each $x\in X\setminus B$. Therefore, as $|X\setminus B|=3$, $|Y\setminus( \cup_{i=1}^{i=4} N_G(x_i))|=8$, and $|N_G(x)\cap (Y\setminus \cup_{i=1}^{i=4} N_G(x_i))|\geq 4$ for each $x\in X\setminus B$, in any case we have $K_{2,2}\subseteq G$, which is  a contradiction. 
 
\end{proof}	
	Therefore, by Claims \ref{c6},  \ref{c8},  \ref{c9}, and \ref{c10}, we may assume that $|B|\geq 5$,  and w.l.g., let  $X\setminus\{x_6,x_7\}\subseteq B$. Furthermore by Claims \ref{c7} and \ref{c0}, we have $| \cup_{i=1}^{i=5} N_G(x_i)|=7+6+5+4+3=25$ and so $K_{5,5}\subseteq \overline{G}[X\setminus \{x_6,x_7\}, Y]$.  Therefore, the proof of Case 2 is complete.
	
\bigskip 
\noindent \textbf{Case 3.} $ \Delta=8$. W.l.g., let $N_G(x_1)=Y_1=\{y_1,\ldots,y_8\}$. As $K_{2,2} \nsubseteq G$, we have $|N_G(x_i)\cap Y_1|\leq 1$. Hence, as $|Y_1|=8$ for each $x\in X\setminus\{x_1\}$, one can say that $K_{5,3}\subseteq \overline{G}[X\setminus \{x_1,x\},Y_1]$.  Let there is a member $x_2$ of $ X\setminus\{x_1\}$, so that $| N_G(x_2)\cap (Y\setminus Y_1)|=7$.  Therefore, as  $K_{2,2} \nsubseteq G$, we have $|N_G(x_i)\cap (N_G(x_2)\cap (Y\setminus Y_1)) |\leq 1$. Hence, since $|  N_G(x_2)\cap (Y\setminus Y_1)|=7$ and $|X|=7$, one can say that $K_{5,2}\subseteq \overline{G}[X\setminus \{x_1,x_2\},  N_G(x_2)\cap (Y\setminus Y_1)]$. So, $K_{5,5}\subseteq \overline{G}[X\setminus \{x_1,x_2\},  N_G(x_1)\cap  N_G(x_1)]$. Now, one can suppose that $| N_G(x)\cap (Y\setminus Y_1)|\leq 6$ for each $x\in X\setminus \{x_1\}$. Now, we are ready to prove the following claim.
\begin{claim}\label{c4}
	Suppose that	$| N_G(x)\cap (Y\setminus Y_1)|= 6$. If either $| N_G(x')\cap N_G(x)\cap (Y\setminus Y_1)|= 0$ for one $x'\in X\setminus \{x_1,x\}$, or $| N_G(x')\cap N_G(x'')\cap N_G(x)\cap (Y\setminus Y_1)|= 1$ for some $x',x''\in X\setminus \{x_1,x\}$,  then $K_{5,5}\subseteq \overline{G}$.
\end{claim}	
\begin{proof}[Proof of Claim \ref{c4}]
	W.l.g., let 	$| N_G(x_2)\cap (Y\setminus Y_1)|= 6$,  $ N_G(x_2)\cap (Y\setminus Y_1)=Y_2=\{y_9, y_{10} \ldots, y_{14}\}$, and   $| N_G(x_3)\cap Y_2|= 0$. Therefore,  Since $K_{2,2} \nsubseteq G$, we have $|N_G(x_i)\cap Y_1\cup Y_2|\leq 2$ for each $i\in \{4,5,6,7\}$. As $|Y_1\cup Y_2|=14$ and $|N_G(x_i)\cap Y_1\cup Y_2|\leq 2$, one can say that $| \cup_{i=3}^{i=7}( N_G(x_i)\cap Y_1\cup Y_2)|\leq 9$, which means that $K_{5,5}\subseteq \overline{G}[X\setminus \{x_1,x_2\},Y_1\cup Y_2]$. For the case that $| N_G(x')\cap N_G(x'')\cap (N_G(x)\cap (Y\setminus Y_1))|= 1$ for some $x',x''\in X\setminus \{x_1,x\}$, the proof is the same. Hence, the claim holds.
\end{proof}	
Set $A$ as follows:
\[A= \{x\in X\setminus \{x_1\}, ~~| N_G(x)\cap (Y\setminus Y_1)|= 6\}.\]
By considering $A$, we can prove the following claim.
\begin{claim}\label{c5}
	If $|A|\neq 0$, then we have $K_{5,5}\subseteq \overline{G}$.
\end{claim}	
\begin{proof}[Proof of Claim \ref{c5}]
	W.l.g., let 	$| N_G(x_2)\cap (Y\setminus Y_1)|= 6$ and $N_G(x_2)\cap (Y\setminus Y_1)=Y_2=\{y_9, y_{10} \ldots, y_{14}\}$.	If $|A|\geq 5$, then by Claim \ref{c4}, it can be said that $|\cup_{x_j\in A'} N_G(x_j)|\leq 25$ where $A'\subseteq A$ and $|A'|=5$. Hence, as $|Y|=30$,  we have $K_{5,5}\subseteq \overline{G}[A, Y]$.  So, suppose that $|A|\leq 4$. Assume that $|A|=i$,  and w.l.g. let $A=\{x_2,x_3,\ldots, x_{i+1}\}$, where $i\in \{1,2,3,4\}$. 
	
	By Claim \ref{c4}, for the case that $i=4$, we have $|\cup_{j=1}^{j=5} N_G(x_j)|= 8+6+5+4+3=26$.  W.l.g., assume that $\cup_{j=1}^{j=5} N_G(x_j)=Y\setminus \{y_{27}, y_{28}, y_{29}, y_{30}\}$. Hence, as $|Y|=30$ and  $K_{2,2}\nsubseteq G$, we have  $| N_G(x)\cap  \{y_{27}, y_{28}, y_{29}, y_{30}|\leq 2$ for at least one member of $\{x_6,x_7\}$. W.l.g., we may suppose that  $| N_G(x_6)\cap  \{y_{27}, y_{28}, y_{29}, y_{30}|\leq 2$.  Hence, we have $K_{6,2}\subseteq \overline{G}[X\setminus \{x_7\}, \{y,y'\}]$, where $y,y'\in   \{y_{27}, y_{28}, y_{29}, y_{30}\}$. W.l.g., assume that $y=y_{29} , y'=y_{30}$. Therefore, as $| N_G(x_i)\cap Y_1|\leq 1$ for each $x_i\in X\setminus \{x_1\}$, we have  $K_{5,3}\subseteq \overline{G}[X\setminus \{x_1,x_7\},Y_1]$.  Therefore  $K_{5,5}\subseteq \overline{G}[X\setminus \{x_1,x_7\},Y_1\cup\{y_{29},y_{30}\}]$.  
	
	For the case that $i=3$, by Claim \ref{c4},  we have $|\cup_{j=1}^{j=4} N_G(x_j)|= 8+6+5+4=23$. W.l.g., assume that $\cup_{j=1}^{j=4} N_G(x_j)=Y'=\{y_1\ldots y_{23}\}$. Hence, if there exists one $i\in \{5,6,7\}$ such that $| N_G(x_i)\cap Y\setminus Y'|\leq 2$, then the proof is the same. So suppose that  for each $i\in \{5,6,7\}$ we have $| N_G(x_i)\cap Y\setminus Y'|\geq 3$. Now as $K_{2,2}\nsubseteq G$, one can checked that there exist at least two member $x_5, x_6$ of $\{x_5,x_6,x_7\}$, such that $| (N_G(x_5)\cup N_G(x_6))  \cap (Y\setminus Y')|\leq 5$. Hence the proof is the same as the case that $i=4$.
	
	For the case that $i=2$, by Claim \ref{c4},  we have $|\cup_{j=1}^{j=3} N_G(x_j)|= 8+6+5=19$. Therefore, using the fact that $|A|=2$ and  Claim \ref{c4}, we have $|\cup_{j=1}^{j=5} N_G(x_j)|\leq  25$.  Hence, the proof is the same.	
	
	So, suppose that $|A|=1$.  Hence, by Claim \ref{c4},  we have $|\cup_{j=1}^{j=2} N_G(x_j)|= 8+6=14$.  Hence, as $|Y|=30$ and by Claim \ref{c4}, we have $| N_G(x_j)\cap N_G(x_2)\cap (Y\setminus Y_1)|= 1$ for each  $x_j\in X'=X\setminus A=\{x_3,\ldots, x_7\}$. Therefore,  we have $| N_G(x)\cap  \{y_{15}, \ldots  y_{30}|\leq 4$ for each $x\in X'$. Set $Y'=Y\setminus (Y_1\cup Y_2)$. As $|N_G(x_1)\cap Y'|\leq 4$, if  there is one member $x_3$ of $X'$, so that $|N_G(x_i)\cap Y'|\leq 3$, then  one can  check that	$K_{5,5}\subseteq \overline{G}[X\setminus \{x_6,x_7\}, Y]$. So, suppose that $|N_G(x)\cap Y'|= 4$. Therefore as $K_{2,2}\nsubseteq G$,  $|Y'|=16$, and $|X'|=5$, it is easy to say that there are two members $x_3,x_4$ of $X'$, such that  $| N_G(x_3)\cap N_G(x_4)\cap Y'|= 1$. Hence we have $| (N_G(x_3)\cap Y') \cup (N_G(x_4)\cap Y')|= 7$. Therefore 	$K_{5,5}\subseteq \overline{G}[X\setminus \{x_6,x_7\}, Y]$. Which means that the proof is complete.
\end{proof}	
Now, by Claim \ref{c5}, let $|A|=0$, that is $| N_G(x)\cap (Y\setminus Y_1)|\leq  5$ for each $x\in X\setminus\{x_1\}$. If 	$| N_G(x)\cap (Y\setminus Y_1)|\leq  4$ for each $x\in X\setminus\{x_1\}$, then $|\cup_{j=2}^{j=6} N_G(x_j)|\leq 25$, which means that	$K_{5,5}\subseteq \overline{G}$. So, w.l.g. let  $| Y_2=N_G(x_2)\cap (Y\setminus Y_1)|= 5$ and let $Y_2=\{y_9,\ldots, y_{13}\}$. As $K_{2,2} \nsubseteq G$, we have $|N_G(x_i)\cap Y_2|\leq 1$ for each $i\neq 1$. Therefore, as $|Y_2|=5$, one can assume that  $|N_G(x_i)\cap Y_2|=1$ for at least two members of $X\setminus \{x_1,x_2\}$; otherwise $K_{5,5}\subseteq \overline{G}[X\setminus \{x_2\}, Y_2]$ and $K_{5,5}\subseteq \overline{G}[X\setminus \{x_1,x_2\},Y_1\cup Y_2]$. W.l.g., assume that  $|N_G(x_i)\cap Y_2|=1$ for each $x_i\in X'''=\{x_3,x_4\}$. If either 	$| N_G(x)\cap (Y\setminus Y_1\cup Y_2)|\leq  3$ for at least one  $x\in X'''$, or $| N_G(x_3)\cap N_G(x_4)\cap Y_2|=1$, then $|\cup_{j=1}^{j=5} N_G(x_j)|\leq 25$, which means that	$K_{5,5}\subseteq \overline{G}$. Now, for $i=3,4$, w.l.g.  suppose that  $| N_G(x_i)\cap (Y\setminus Y_1\cup Y_2)|= 4$ and $| N_G(x_3)\cap N_G(x_4)\cap  Y_2|=0$. W.l.g., let $N_G(x_3)\cap (Y\setminus Y_1\cup Y_2)=Y_3=\{y_{14},\ldots, y_{17}\}$ and $N_G(x_4)\cap (Y\setminus Y_1\cup Y_2)=Y_4=\{y_{18},\ldots, y_{21}\}$. If there exists a member $x$ of $\{x_5,x_6,x_7\}$, such that   $|N_G(x)\cap \{y_{22}, \ldots, y_{30} \}|\leq 4$, then we have  $K_{5,5}\subseteq \overline{G}[\{x_1,x_2, x_3,x_4, x\},Y]$. Therefore,  assume that
  $|N_G(x)\cap \{y_{22}, \ldots, y_{30} \}|\geq 5$ for each $x\in \{x_5,x_6,x_7\}$. In any case  $K_{2,2}\subseteq G$, which is  a contradiction. So, the proof of the Case 3 is complete.
	
	Hence, by Cases 1, 2, 3, the proof of $(II)$ is complete. 	Thus,  $BR_7(K_{2,2}, K_{5,5}) \leq 30$ and  by $(I)$, $BR_7(K_{2,2}, K_{5,5})= 30$. This also implies that $BR_8(K_{2,2}, K_{5,5})=30$. Hence the theorem holds.
\end{proof}

	\begin{proof}[\bf Proof of Theorem \ref{M.th}]
	By combining Theorems  \ref{th0}, \ref{th2},  and \ref{th3},  we conclude that the proof of Theorem \ref{M.th} is complete.
	\end{proof}
 
	\section{Declarations}
{\bf Conflict of Interest:} On behalf of all authors, the corresponding author states	that there is no conflict of interest.

	{\bf Data Availability Statement:}No data were generated or used in the preparation of this paper.
	
	\bibliographystyle{spmpsci} 
	\bibliography{BI}
\end{document}